\def\R{\mathbb{R}}
\def\Q{\mathbb{Q}}
\def\Z{\mathbb{Z}}
\def\N{\mathbb{N}}
\def\F{\mathcal{F}}
\def\one{\mathds{1}}
\def\eps{\varepsilon}
\renewcommand\leq{\leqslant}
\renewcommand\geq{\geqslant}
\renewcommand\hat{\widehat}
\newcommand{\ft}[1]{\widehat #1}
\newtheorem{thm}{Theorem}[section]
\newtheorem{lem}[thm]{Lemma}
\newtheorem{corollary}[thm]{Corollary}
\newtheorem{prop}[thm]{Proposition}
\newtheorem{question}[thm]{Question}
\newtheorem{conj}[thm]{Conjecture}
\newcommand{\thmref}[1]{Theorem~\ref{#1}}
\newcommand{\secref}[1]{Section~\ref{#1}}
\newcommand{\subsecref}[1]{Subsection~\ref{#1}}
\newcommand{\lemref}[1]{Lemma~\ref{#1}}
\newcommand{\propref}[1]{Proposition~\ref{#1}}
\newcommand{\quesref}[1]{Question~\ref{#1}}
\newtheorem{remark}[thm]{Remark}
\newenvironment{enumerate-math}
{\begin{enumerate}
		\addtolength{\itemsep}{5pt}
		}
	{\end{enumerate}}
\newenvironment{enumerate-text}
{\begin{enumerate}
		\addtolength{\itemsep}{5pt}
		}
	{\end{enumerate}}
\begin{document}

\begin{frontmatter}[classification=text]
%% EDITOR: this will force the keywords to appear right after the Abstract.
%%   If the abstract is too long and would force the keywords off the
%%   front page, please comment out % [classification=text] above
%%   This way the keywords will be floated on the bottom of the first page
%%   even though the Abstract spills over to the next page.

%%% AUTHOR: Title goes here.  This line is optional.  You must use it
%%   if title has footnote attached or requires nontrivial typesetting,
%%   e.g., inclusion of linebreaks to force nice layout.
\title{The Structure of Translational Tilings in $\mathbb{Z}^d$} %% please capitalize all significant words

%%% AUTHOR:
%%% List all authors. If you wish, place grant acknowledgements in \thanks.
%%% In brackets include a short tag for each author.
\author[RG]{Rachel Greenfeld\thanks{Partially supported by the Eric and Wendy Schmidt Postdoctoral Award.}}
\author[tao]{Terence Tao\thanks{Partially supported by NSF grant DMS-1764034 and by a Simons Investigator Award.}}

%%% AUTHOR: Abstract goes here
\begin{abstract}
We obtain structural results on translational tilings of periodic functions in $\Z^d$ by finite tiles. In particular, we show that any level one tiling of a periodic set in $\Z^2$ must be weakly periodic (the disjoint union of sets that are individually periodic in one direction), but present a counterexample of a higher level tiling of $\Z^2$ that fails to be weakly periodic. We also establish a quantitative version of the two-dimensional periodic tiling conjecture which asserts that any finite tile in $\Z^2$ that admits a tiling, must admit a periodic tiling, by providing a polynomial bound on the period; this also gives an exponential-type bound on the computational complexity of the problem of deciding whether a given finite subset of $\Z^2$ tiles or not.  As a byproduct of our structural theory, we also obtain an explicit formula for a universal period for all tilings of a one-dimensional tile.
\end{abstract}
\end{frontmatter}

%%% AUTHOR: body of paper starts here
\section{Introduction}
% The body of your paper goes here~\cite{cilleruelo}.
	\subsection{Translational tiling} Let $d \geq 1$ be an integer, and let $F \subset \Z^d$ be a finite subset of the standard lattice $\Z^d$.  A \emph{tiling}\footnote{All tilings in this paper are by translation; we do not consider tilings that involve rotation or reflection in addition to translation.} of $\Z^d$ by $F$ is a subset $A$ of $\Z^d$ with the property that every element of $\Z^d$ has precisely one representation of the form $f+a$ with $f \in F$ and $a \in A$.  We refer to $F$ as the \emph{tile} and $A$ as the \emph{tiling set}, thus $\Z^d$ is partitioned (or \emph{tiled}) by translates $F+a$ of the tile $F$ by elements $a$ of the tiling set.  In terms of the convolution operation
	$$ f*g(x) \coloneqq \sum_{y \in \Z^d} f(y) g(x-y)$$
	on $\Z^d$ (which is well-defined if at least one of $f,g$ is compactly supported, and $f,g$ are real-valued; alternatively, one of $f,g$ can take values in the unit circle $\R/\Z$ if the other is integer-valued), this property can be expressed as
	$$ \one_F * \one_A = 1,$$
	where $\one_F$ denotes the indicator function of $F$, and similarly for $A$.  More generally, for any natural number $k$ and a subset $E \subset \Z^d$, a \emph{tiling of level $k$} of $E$ by the tile $F$ is a set $A$ such that
	$$ \one_F * \one_A = k \one_E.$$
	We omit the qualifier ``of level $k$'' if $k=1$, and ``of $E$'' if $E = \Z^d$.
	
	\subsection{Periodicity} We call a function $f \colon \Z^d \to \R$ \emph{$\Lambda$-periodic} for some subgroup $\Lambda$ of $\Z^d$ if $f(x+\lambda) = f(x)$ for all $x \in \Z^d$ and $\lambda \in \Lambda$; we simply call $f$ \emph{periodic} if it is $\Lambda$-periodic for some $\Lambda$ which is a lattice (a subgroup on $\Z^d$ whose index $[\Z^d:\Lambda]$ is finite).  
	
	We call a set $E \subset\Z^d$ \emph{$\Lambda$-periodic} (resp. \emph{periodic}) if $\one_E$ is $\Lambda$-periodic (resp. periodic).  Note from Lagrange's theorem that if $\Lambda$ is a lattice of index $\ell \coloneqq [\Z^d:\Lambda]$, then any $\Lambda$-periodic set or function will also be $\ell\Z^d$-periodic.  We call a set $E\subset \Z^2$ \emph{weakly periodic} if it can be represented as the disjoint union $E = E_1 \uplus \dots \uplus E_m$ of finitely many sets $E_1,\dots,E_m$, with each $E_j$ $\langle h_j\rangle$-periodic along a one-dimensional subgroup $\langle h_j \rangle \coloneqq \{n h_j \colon n \in \Z\}$ for some non-zero $h_j \in \Z^2$.
	
	\subsection{Periodicity of tiling}\label{sub-periodicity} In one dimension it is easy to see from the pigeonhole principle that any tiling $A$ by a finite tile, of any level $k$, is periodic (see \cite{N}).  However in higher dimensions tiling sets need not be periodic.  For instance, if $d=2$ and $F_1$ is the square $F_1 = \{0,1\}^2$, then any set $A_1$ of the form
	\begin{equation}\label{a1-ex}
	A_1 \coloneqq \{ (2n, 2m+a(n)) \colon n,m \in \Z \}
	\end{equation}
	with $a \colon \Z \to \{0,1\}$ an arbitrary function, is a tiling by $F_1$, but is only periodic if $a$ is periodic.  On the other hand, we observe that this tiling is still $\langle (0,2)\rangle$-periodic.  As a slightly more complex example of this type, if $F_2 \coloneqq \{0,2\} \times \{0,1\}$, then any set $A_2$ of the form
	\begin{equation}\label{a2-ex}
	A_2 \coloneqq \{ (4n, 2m+a(n)) \colon n,m \in \Z \} \cup \{ (4n+1+2b(m), 2m) \colon n \in \Z \}
	\end{equation}
	for arbitrary functions $a,b \colon \Z \to \{0,1\}$ 
	is a tiling by $F_2$.  In general, $A_2$ will not be periodic along any non-trivial group $\Lambda$, but will always be weakly periodic, being the disjoint union of a $\langle (0,2)\rangle$-periodic set and a $\langle (4,0) \rangle$-periodic set.
	
	Our investigations were primarily motivated by the \emph{periodic tiling conjecture} of Lagarias and Wang \cite{LW} (which also implicitly appears previously in \cite[p. 23]{GS}):
	
	\begin{conj}[Periodic tiling conjecture]\label{periodic-conj}  Let $F \subset \Z^d$ be a finite tile.  If there is at least one tiling $A$ by $F$, then there exists a tiling $A'$ by $F$ which is periodic.
	\end{conj}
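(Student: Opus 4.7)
The plan is to attack \conjref{periodic-conj} in dimension $d=2$ through the intermediate notion of weak periodicity suggested by the examples $A_1,A_2$ in \subsecref{sub-periodicity}. The strategy has two stages: (i) prove a structural theorem asserting that \emph{every} tiling of $\Z^2$ by a finite tile $F$ is weakly periodic; and (ii) upgrade a weakly periodic tiling to a genuinely periodic one.

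For stage (i), I would pass to the Fourier side on the 2-torus $\T^2$. The equation $\one_F*\one_A = 1$ becomes $\ft{\one}_F \cdot \ft{\one}_A = \delta_0$ as tempered distributions, so the spectrum of $\one_A$ is confined to $\{0\} \cup \zeros(\ft{\one}_F)$. Since $\ft{\one}_F$ is a Laurent polynomial in two variables, its zero set decomposes into finitely many real-algebraic curves together with isolated points. The heart of the argument is that any curve actually carrying spectral mass of $\one_A$ must be a translate of a rational one-dimensional subtorus: an irrational curve should not support the Fourier transform of a bounded function on $\Z^2$ in a way compatible with $\one_A$ being $\{0,1\}$-valued, so only rational directions survive. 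Partitioning the spectrum of $\one_A$ according to which rational subtorus it lives on should then yield a decomposition $A = A_1 \uplus \cdots \uplus A_m$ with each $A_j$ periodic along the direction $h_j \in \Z^2$ dual to its subtorus, which is exactly weak periodicity.

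For stage (ii), starting from a weakly periodic tiling I would pass to the sublattice $\Lambda \subset \Z^2$ spanned by $h_1,\dots,h_m$ and slice $\Z^2$ along cosets transverse to some fixed $h_j$. On each slice the restriction of the tiling problem becomes essentially one-dimensional, where the classical pigeonhole argument forces periodicity with an effective period depending only on $|F|$. There are only finitely many possible slice-patterns, so another pigeonhole across the transverse direction produces a coset-invariant choice of 1D periodic tiling, which assembled back together is a tiling of $\Z^2$ periodic in two independent directions, hence fully periodic. Tracking the periods through both pigeonholes should yield a polynomial bound in $\diam(F)$, giving the quantitative form advertised in the abstract.

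The main obstacle will be stage (i), specifically the claim that only rational components of $\zeros(\ft{\one}_F)$ can carry the spectrum of $\one_A$. A priori $\ft{\one}_F$ can be a very complicated exponential polynomial whose real zero set contains genuine transcendental arcs, and merely knowing that $\one_A$ is bounded and $\{0,1\}$-valued is not obviously enough to exclude such arcs; one likely needs a delicate combinatorial-algebraic analysis of how the finitely many exponentials making up $\ft{\one}_F$ can cancel along a common curve, together with a careful quantification of Fourier support for indicator functions. Even granting stage (i), the strategy is intrinsically two-dimensional: in $d \ge 3$ the zero set of $\ft{\one}_F$ is generically higher-dimensional and need not lie in a union of subtori, so this approach gives no information about the conjecture in higher dimensions, where it presumably remains open.
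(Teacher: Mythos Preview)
Your two-stage outline matches the paper's architecture, but both stages diverge from the paper's actual arguments, and each has a real gap.

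For stage (i), the paper does \emph{not} work on the Fourier side. The key input is a \emph{dilation lemma}: if $\one_F*\one_A=1$ then $\one_{rF}*\one_A=1$ for all $r$ in a fixed arithmetic progression, proved by reducing mod $p$ and using the Frobenius identity $(x+y)^p=x^p+y^p$. Averaging over $r$ gives a decomposition $\one_A = 1 - \sum_{j=1}^m \varphi_j$ with each $\varphi_j:\Z^2\to[0,1]$ periodic in a direction $h_j$ determined by $F$. The Fourier-support statement you are aiming for (spectrum of $\one_A$ contained in a union of rational subtori) is a \emph{corollary} of this, not something the paper proves directly; the authors remark that they initially tried the Fourier route and abandoned it because the physical-space argument was both simpler and quantitatively sharper. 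More importantly, even granting the spectral support, your step ``partition the spectrum $\Rightarrow$ partition $A$ into one-periodic \emph{sets}'' does not follow: you get a decomposition of $\one_A$ into one-periodic \emph{functions}, not indicators. The paper closes this gap by working modulo $1$, showing each $\varphi_j\bmod 1$ is a polynomial along a transversal line, and then invoking Weyl equidistribution together with the pointwise constraint $\sum_j\varphi_j\le 1$ to force those polynomials to be rational. That last constraint is specific to level one and is exactly what fails at higher level: the paper exhibits a level-$4$ tiling of $\Z^2$ that is \emph{not} weakly periodic, so the obstruction cannot be read off from the zero set of $\ft{\one}_F$ alone.

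For stage (ii), your pigeonhole-across-slices argument is qualitatively correct (and is essentially Bhattacharya's), but your claim that it yields a polynomial bound in $\diam(F)$ is wrong: the number of possible slice-patterns is exponential in $\diam(F)$, so pigeonholing across them gives an exponential period. The paper obtains the polynomial bound by reusing the dilation lemma in a ``slicing'' form, which lets one show directly that each $\one_F*\one_{A_j}$ is periodic with polynomially bounded period before any pigeonholing occurs.
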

	
	This conjecture was established in $d=1$ in \cite{N} as a quick application of the pigeonhole principle.  For $d=2$, the conjecture was recently established by Bhattacharya \cite{BH} using ergodic theory techniques and a ``dilation lemma'' proven using elementary number theory (or elementary commutative algebra); see \cite{WV} for some earlier partial results in the $d=2$ case.  For $d>2$ the conjecture is known when the cardinality $|F|$ of $F$ is prime or equal to $4$ \cite{szegedy}, but remains open in general.  On the other hand, the tiling conjecture for multiple tiles $F_1,\dots,F_k$ in $\Z^d$ is known to be false \cite{B}, \cite{R}.  Finally, we remark that by a well known argument attributed to Wang (see \cite{B}, \cite{R}), the validity of Conjecture \ref{periodic-conj} at a given dimension $d$ implies that the problem of determining whether a given tile $F$ tiles $\Z^d$ or not is decidable. We refer the reader to \cite{R}, \cite{szegedy} for further discussion and surveys of tiling problems in lattices.
	
	\begin{remark}  There is also extensive literature on tiling problems on other groups than $\Z^d$, both by indicator functions $\one_F$ and by more general tiling functions $f$. For instance, the analogue of Conjecture \ref{periodic-conj} in $\R^d$ is known for convex polytopes \cite{V}, \cite{M, Mc81} and for topological disks \cite{gn}, \cite{kenyon}, and the one-dimensional case is established (for bounded tiles) in \cite{leptin-muller}, \cite{LW}, \cite{kl}.  See the recent papers \cite{liu}, \cite{kl-survey} for further results and open problems of tiling in $\R$ and in $\R^d$.  Tiling of more general locally compact groups by functions is studied in \cite{HN}, \cite{leptin-muller}.  There is also substantial literature on tiling finite abelian groups, which in this context is also known as \emph{factorization}; see the text \cite{ss} for a detailed presentation of this topic. Finally, we note some connections between discrete tilings and low-complexity subshifts of finite type, see for instance the recent survey \cite{k-survey}. However, the focus of this paper will be exclusively on tiling problems in lattices $\Z^d$.
	\end{remark}

	\subsection{Results} We can now state our first main theorem, which clarifies the nature of  level one tilings of periodic sets  in two dimensions, in particular revealing a fundamental difference between level one tilings and higher level tilings.
	
	\begin{thm}[Tilings in $\Z^2$]\label{main}\ 
		\begin{enumerate-math}
			\item (Level one tilings in the plane are weakly periodic) If $F \subset \Z^2$ is finite and $A$ is a level one tiling of a periodic set by $F$, then $A$ is weakly periodic. 
			\item (Higher level tilings in the plane need not be weakly periodic)  There exists an eight-element subset $F \subset \Z^2$ and a level $4$ tiling of $\Z^2$ which is not weakly periodic.
		\end{enumerate-math}
	\end{thm}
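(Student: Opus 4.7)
For part (i), my plan is to analyze the tiling equation $\one_F \ast \one_A = \one_E$ via Fourier analysis on the two-torus. Since $E$ is $\Lambda$-periodic for some finite-index sublattice $\Lambda \subseteq \Z^2$, its spectrum lies in the finite dual group $\Lambda^* \subset \T^2$. Viewing the tiling relation distributionally (or on large finite quotients, since $A$ need not be periodic), one sees that the spectral mass of $\one_A$ off $\Lambda^*$ must be carried by the zero set $\zft{F} = \{\xi \in \T^2 : \hat{\one_F}(\xi) = 0\}$. In two dimensions $\hat{\one_F}$ is a two-variable trigonometric polynomial, so $\zft{F}$ is a real-analytic subvariety of dimension at most one. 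To pin down these one-dimensional components, I would invoke a dilation-lemma argument in the spirit of Bhattacharya's work: the identity $\one_{rF} \ast \one_A = \one_E$ should hold for every $r$ coprime to $|F|$, which forces $\zft{F}$ to be preserved under multiplication by infinitely many integers and hence to consist of translates of rational one-subtori $\{\xi : h\cdot\xi \in \Z\}$, one for each rational direction $h \in \Z^2$.

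This yields a Fourier-level decomposition $\one_A = \sum_j g_j$ with each $g_j$ being $\langle h_j\rangle$-periodic. The crucial step is then to promote this to a disjoint set-theoretic decomposition $A = E_1 \uplus \cdots \uplus E_m$ with each $E_j$ periodic along $\langle h_j \rangle$. The level-one hypothesis must enter essentially here: the $\{0,1\}$-valuedness and non-negativity of $\one_A$, combined with the fact that each $g_j$ is concentrated at frequencies in a single rational direction, should force the $g_j$ (after suitable regrouping) to be indicator functions whose supports partition $A$. I expect this passage from a spectral decomposition to a genuine disjoint partition to be the main obstacle, since, as part (ii) demonstrates, the analogous statement must fail without the level-one assumption; so any workable argument has to exploit $\one_A \in \{0,1\}$ in a genuinely non-robust way.

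For part (ii), my plan is to construct an eight-element $F\subset\Z^2$ whose level-$4$ tiling relation decouples into several independent one-dimensional tiling problems along two distinct rational directions. The idea is to take $F$ as a disjoint union of two four-element sub-tiles, each aligned with a different rational direction, so that $\one_F \ast \one_A = 4\cdot\one_{\Z^2}$ is equivalent to a system of 1D level-$k$ tiling equations on rows and columns of a suitable sublattice. I would then build $A$ by making \emph{non-uniform} choices among the 1D solutions: rows governed by one aperiodic, non-eventually-periodic sequence (say a Sturmian coding), and columns governed by another incommensurate such sequence. The resulting $A$ satisfies the global level-$4$ condition by construction, and to show that no finite decomposition $A = E_1 \uplus \cdots \uplus E_m$ with each $E_j$ being $\langle h_j \rangle$-periodic exists, I would exhibit a combinatorial invariant of $A$ (a subword-complexity or frequency count along two independent directions) that is inconsistent with any finite weakly periodic structure. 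The obstacles here are algebraic design of $F$ so that the decoupling is clean, and verifying the complexity invariant rigorously.
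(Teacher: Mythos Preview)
For part (i), your Fourier-analytic setup is in the right spirit --- indeed the paper remarks that $\hat\one_A$ is supported on a finite group together with finitely many codimension-one subgroups $(qf)^\perp$, and that the authors originally pursued a Fourier approach before switching to physical space. But the gap is precisely at the ``crucial step'' you flag: the one-periodic pieces $g_j$ (the paper's $\varphi_j$) are \emph{not} indicator functions and do not directly yield a disjoint partition of $A$; your expectation that $\{0,1\}$-valuedness forces a regrouping into indicators is not how the argument goes. The paper's mechanism is much more indirect. Working modulo $1$ one has $\sum_j \varphi_j \equiv 0$, and by applying discrete differentiation operators $\Delta_{e - b_{i,j}qh_j}$ that annihilate all terms but one, each $\varphi_j \bmod 1$ is shown to be a \emph{polynomial} along transversal arithmetic progressions. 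The level-one hypothesis enters as the pointwise bound $\sum_j \varphi_j \le 1$, which forces at most one $\varphi_j$ to have supremum exceeding $1/2$ on any given line; a Weyl-type exponential sum estimate then rules out equidistributed polynomials, forcing all $\varphi_j \bmod 1$ to be periodic with bounded period. The weak periodicity of $A$ then comes from a coset decomposition with respect to a computable lattice $\Lambda$, on each coset of which $\one_A$ is one-periodic --- not from the $\varphi_j$ being indicators. Your proposal contains none of this polynomial/equidistribution dichotomy, and without it the passage from the spectral decomposition to a set partition is a genuine gap.

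For part (ii), the paper's construction is structurally different from yours and your design is unlikely to work as stated. The tile is not a disjoint union of two sub-tiles but a threefold \emph{convolution} $\one_F = \one_{\{0,(0,2)\}} * \one_{\{0,(1,0)\}} * \one_{\{0,(2,-2)\}}$ in three pairwise incommensurable directions. The set $A$ is given explicitly by a Bohr-set formula
\[
\one_A(m_1,m_2) = \chi(m_1,m_2)\bigl(\{\alpha m_1\}+\{\alpha m_2\}-\{\alpha(m_1+m_2)\}-\tfrac12\bigr) + \tfrac12
\]
for an irrational $\alpha$ and a $4\Z^2$-periodic sign $\chi$; each of the three non-periodic summands is annihilated by one of the three convolution factors of $\one_F$, giving $\one_F * \one_A = 4$. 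Non-weak-periodicity is then shown by a discrete-differentiation argument exploiting that $\{\alpha m_2\}$ takes infinitely many values, not by a subword-complexity invariant. In your additive design $\one_F = \one_{F_1} + \one_{F_2}$, the tiling equation demands that $\one_{F_1} * \one_A$ and $\one_{F_2} * \one_A$ be \emph{simultaneously} constant, which is a far more rigid constraint than in the convolutional setup; making independent aperiodic row/column choices will typically violate one of the two conditions, and I do not see a mechanism in your sketch for producing the necessary cancellation.
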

	
	Theorem \ref{main}(ii) is established by an explicit construction which we give in Section \ref{counter-sec}; the tiling set is a finite Boolean combination of ``Bohr sets''.  We now discuss Theorem \ref{main}(i).
	In fact, we have a more quantitative version of this result.  Here and in the sequel we use the asymptotic notation $X = O_{|F|}(Y)$, $X \ll_{|F|} Y$, or $Y \gg_{|F|} X$ to denote an estimate of the form $|X| \leq C_{|F|} Y$ where $C_{|F|}$ is a quantity depending only on $|F|$.  Similarly with the subscript $|F|$ replaced by other parameters.
	
	\begin{thm}[Quantitative weak periodicity of level one  tilings in $\Z^2$]\label{main-quant}  Let $F \subset \Z^2$ be a tile with $1 < |F| < \infty$ and $0 \in F$, and let $A$ be a level one tiling of an $\ell \Z^2$-periodic set $E$ by $F$ for some natural number $\ell$.  Then  there is a lattice $\Lambda \subset \ell \Z^2$ of index 
		$$ [\ell \Z^2 : \Lambda] \ll_{|F|} \mathrm{diam}(F)^{2(|F|-1)^2} $$ 
		and pairwise incommensurable vectors $h_1,\dots,h_m \in \Z^2$ for some $1 \leq m \leq |F|-1$ with magnitude bounds
		\begin{equation}\label{h-b}
		\|h_1\|, \dots, \|h_m\| \leq \mathrm{diam}(F)^{|F|-1}
		\end{equation}
		and a positive integer multiple $L$ of size
		\begin{equation}\label{l-b}
		L \ll_{|F|} \mathrm{diam}(F)^{|F|(|F|-1)}
		\end{equation}
		such that the intersection of $A$ with each coset of $\Lambda$ is $\langle \ell L h_j \rangle$-periodic for some $j=1,\dots,m$.  Furthermore each $h_j$ is an integer multiple of a vector in $F \backslash \{0\}$.  The lattice $\Lambda$, integer $L$, and the vectors $h_1,\dots,h_m$ are allowed to depend on $E,\ell,F$ but do not otherwise depend on the choice of tiling set $A$.
	\end{thm}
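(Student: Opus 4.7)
The plan is to work on the Fourier side. Taking the Fourier transform of the tiling identity $\one_F \ast \one_A = \one_E$ yields $\hat{\one}_F \, \hat{\one}_A = \hat{\one}_E$ as distributions on the dual torus $\T^2 = \R^2/\Z^2$. Since $E$ is $\ell\Z^2$-periodic, the distribution $\hat{\one}_E$ is supported on the finite ``rational'' set $R \coloneqq \tfrac{1}{\ell}\Z^2 / \Z^2$, so $\hat{\one}_A$ is forced to live on $\zeros(\hat{\one}_F) \cup R$, where $\zeros(\hat{\one}_F)$ denotes the zero set of the trigonometric polynomial $\hat{\one}_F(\xi)=\sum_{f\in F}e^{2\pi i f\cdot\xi}$.

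The heart of the argument should be to show that the portion of $\zeros(\hat{\one}_F)$ that can actually carry Fourier mass of $\one_A$ is confined to a finite union of one-dimensional subgroup cosets in $\T^2$, each of which is a translate of the annihilator of $\langle h_j\rangle$ for some $h_j$ parallel to an element of $F\setminus\{0\}$. To identify such directions I would iterate a dilation-lemma argument in the spirit of Bhattacharya \cite{BH}: dilating $A$ by an integer $r$ coprime to a suitable modulus depending on $F$ and $E$ produces another tiling of $E$, and equating Fourier coefficients of the original and dilated tilings at torsion points forces the non-discrete part of the Fourier support of $\one_A$ to align with $\T_{h_j} \coloneqq \{\xi \in \T^2 : h_j \cdot \xi \in \Z\}$ for finitely many $h_j$, with the $h_j$ determined only by $F$ and $\ell$ (hence independent of $A$, as the statement demands). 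The bound $m \leq |F|-1$ is natural because $\hat{\one}_F$, restricted to any line transverse to a putative direction $h_j$, is a one-dimensional exponential sum of at most $|F|$ terms, whose nontrivial ``direction-producing'' factorisations are bounded in number by $|F|-1$.

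Given this structural description of the Fourier support, one obtains an analytic decomposition $\one_A = g + \sum_{j=1}^{m} f_j$, in which $\hat g$ is supported on $R$ (so $g$ is fully periodic) and each $\hat f_j$ is supported on a line coset parallel to $\T_{h_j}$ (so $f_j$ is $\langle L_j h_j \rangle$-periodic for a bounded integer $L_j$). This is an analytic, not a set-theoretic, decomposition; the $f_j$ are complex-valued. To upgrade to a genuine partition of $A$, I would pass to a refined sublattice $\Lambda\subset\ell\Z^2$ chosen so that on each $\Lambda$-coset the restriction of $g$ is constant in every direction $h_j$ and the restrictions of the $f_j$ become honest one-dimensional signals supported on pairwise transverse lines. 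A one-dimensional pigeonhole argument in the spirit of the proof of the $d=1$ case in \cite{N} then forces $A\cap(\text{coset})$ to be $\langle \ell L h_j\rangle$-periodic for a \emph{single} $j$, and binning cosets by which $j$ governs them yields the claimed weakly periodic structure.

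I expect the structure-of-zeros step to be the principal obstacle: promoting the tautological support statement ``$\hat{\one}_F \hat{\one}_A$ is supported on $R$'' to the refined conclusion that the \emph{relevant} zeros of $\hat{\one}_F$ assemble into cosets of one-dimensional subgroups aligned with vectors in $F$ is not a formal consequence of the tiling identity and must use crucially that $A$ is a \emph{level one} tiling (part (ii) of \thmref{main} shows that higher levels fail this). The quantitative bounds should then reduce to a degree-tracking bookkeeping exercise: each $h_j$ arises as the ``period'' of a factor of $\hat{\one}_F$ whose coefficients come from entries of $F$ of size $O(\diam F)$, producing the bound \eqref{h-b}; the lattice $\Lambda$ is cut out by the joint geometry of at most $(|F|-1)^2$ such one-dimensional subgroups intersected with $\ell\Z^2$, yielding the stated index bound; and $L$ is the least common multiple of the denominators associated to the rational part $R$ and the dilation parameters, giving \eqref{l-b}.
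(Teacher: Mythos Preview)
Your Fourier-analytic framing is not wrong in spirit; indeed the paper explicitly remarks that the authors began on the Fourier side before switching to physical space (see the remark following \thmref{structure}).  The analytic decomposition $\one_A = g + \sum_j f_j$ you describe is morally equivalent to the paper's structure theorem, which is obtained directly in physical space by averaging the dilation lemma: one writes $\one_A = \one_E - \sum_{j=1}^m \varphi_j$ with each $\varphi_j\colon\Z^2\to[0,1]$ being $\langle qh_j\rangle$-periodic for pairwise incommensurable $h_j$ built from elements of $F\setminus\{0\}$.  So the first half of your outline can be made to work, and leads to the same place.

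The genuine gap is in your ``upgrade'' step.  Having $\one_A$ expressed as a finite sum of one-periodic real-valued functions does \emph{not} by itself force $A\cap(x+\Lambda)$ to be periodic in a \emph{single} direction $h_j$; after all, any bounded function whatsoever admits such a decomposition.  The one-dimensional pigeonhole argument you invoke would at best show that each one-dimensional slice of $A$ is eventually periodic with a period depending on that slice, which neither singles out one direction per coset nor gives the polynomial bounds the theorem asserts (pigeonhole gives exponential bounds, as the paper notes after \corref{structure-1d}).  You correctly flag that the level-one hypothesis must enter here, but you do not say how.

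The paper's mechanism is quite specific and is not visible from the Fourier side.  First, working modulo $1$ eliminates $\one_A$ and $\one_E$ and gives $\sum_j \varphi_j = 0 \bmod 1$; applying discrete derivatives in cleverly chosen directions then shows that along any line $\{x+ne\}$ each $\varphi_j \bmod 1$ is a \emph{polynomial} in $n$ of degree at most $m-2$.  Second, the level-one constraint enters pointwise as $\sum_j \varphi_j \le 1$, which forces, for each pair $i\ne j$, that $\sup_n P_{x,i}(n)+\sup_n P_{x,j}(n)\le 1$.  Hence on any given coset either one $\varphi_j$ already accounts for all the mass (and the others vanish), or every $\varphi_j$ stays in $[0,1/2]$, so that $\varphi_j\bmod 1$ avoids an interval of $\R/\Z$.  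Weyl equidistribution (in the quantitative form of Stechkin's exponential sum bound) then forces each polynomial to be periodic with bounded period.  It is this interplay of the mod-$1$ polynomial structure with the pointwise inequality $\sum_j\varphi_j\le 1$ that does the real work; your proposal has no analogue of it, and without it the argument does not close.
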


	Note that Theorem \ref{main-quant} implies Theorem \ref{main}(i) after translating the tile $F$ so that $0 \in F$ and dealing with the easy case $|F|=1$ separately. \thmref{main-quant} also allows us to classify two-dimensional tilings of level one in terms of one-dimensional tilings. This classification, in turn, provides a  description of the structure of any two-dimensional tiling of level one.
	Moreover, using Theorem \ref{main-quant} we prove the following quantitative generalization of the $d=2$ case  of Conjecture \ref{periodic-conj}, with polynomial type bounds (and also an extension to tilings of other periodic subsets $E$ of $\Z^2$ than the full lattice $\Z^2$):
	
	\begin{thm}[Quantitative periodic tiling conjecture in two dimensions]\label{period-quant}  Let $F \subset \Z^2$ be a finite tile, and let $E \subset \Z^2$ be an $\ell \Z^2$-periodic set for some $\ell \geq 1$.  If there is at least one tiling $A$ of $E$ by $F$, then there exists a tiling $A'$ of $E$ by $F$ by an $\ell M \Z^2$-periodic set with
		$$ M \ll_{|F|} \mathrm{diam}(F)^{O(|F|^4)}.$$
	\end{thm}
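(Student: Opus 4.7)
The plan is to deduce Theorem~\ref{period-quant} from Theorem~\ref{main-quant} by using the structural data it provides as a scaffold for constructing a periodic tiling $A'$. The crucial feature of Theorem~\ref{main-quant} is that the lattice $\Lambda$, the vectors $h_1,\ldots,h_m$, and the integer $L$ depend only on $E, \ell, F$, not on the particular tiling $A$, so they are available a priori for the construction of $A'$.

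First, I would apply Theorem~\ref{main-quant} to the given tiling $A$ of $E$ to extract the sublattice $\Lambda \subset \ell\Z^2$ of index $D \ll_{|F|} \mathrm{diam}(F)^{2(|F|-1)^2}$, the pairwise incommensurable vectors $h_1,\ldots,h_m$ (with $m \leq |F|-1$) satisfying \eqref{h-b}, and the integer $L$ satisfying \eqref{l-b}. For each coset $c + \Lambda$, the restriction $A \cap (c+\Lambda)$, being $\langle \ell L h_{j(c)}\rangle$-periodic inside the rank-$2$ lattice $\Lambda$, descends to a subset of the essentially one-dimensional quotient $\Lambda / \langle \ell L h_{j(c)}\rangle \cong \Z \times (\Z/k\Z)$. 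The tiling equation $\one_F * \one_A = \one_E$, read on this quotient, becomes a one-dimensional tiling of a periodic subset by a $1$D tile of cardinality at most $|F|$; the $1$D periodic tiling theorem, with the universal-period bound $P \ll_{|F|} \mathrm{diam}(F)^{O(|F|)}$ obtained as a byproduct of the paper's structural theory, then provides a periodic replacement for each such $1$D tiling.

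Next, I would assemble these coset-wise periodic $1$D tilings into a candidate $A'$ and take the global period to be $\ell M \Z^2$ with $M$ a common multiple of (i) the index $D$ of $\Lambda$ in $\ell\Z^2$, so that $\ell M \Z^2 \subset \Lambda$; (ii) the quantities $L P \|h_j\|$ along each direction $h_j$, so that $\ell M \Z^2$ contains all relevant $1$D periods simultaneously; and (iii) a transverse period capturing how the patterns on adjacent cosets of $\Lambda$ match up. Each of these factors is polynomial in $\mathrm{diam}(F)$ with exponent $O(|F|^2)$, and multiplying the at most $|F|$ such factors together yields $M \ll_{|F|} \mathrm{diam}(F)^{O(|F|^4)}$, as required.

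The main obstacle will be the assembly step itself: the $1$D periodic tilings on different cosets of $\Lambda$ are coupled through the tile $F$ (which in general straddles several cosets of $\Lambda$), so one cannot simply pick a periodic $1$D tiling independently on each coset and expect the result to satisfy $\one_F * \one_{A'} = \one_E$. Resolving this requires exploiting the $A$-independence of the structural data of Theorem~\ref{main-quant}: the compatibility constraints between cosets form a periodic constraint satisfaction system whose solution space contains the original $A$, so a compactness or pigeonhole argument inside this finite-state system extracts a periodic solution $A'$ with the claimed period bound.
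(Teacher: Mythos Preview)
Your proposal has a genuine gap at the reduction step. When you restrict attention to a single coset $c+\Lambda$, the equation $\one_F * \one_A = \one_E$ evaluated on that coset involves $\one_A$ on \emph{all} cosets of the form $c+\Lambda-f$, $f\in F$; there is no way to read it as a one-dimensional tiling ``by a $1$D tile of cardinality at most $|F|$'' on that coset alone. You yourself flag this coupling in your final paragraph, but by then the earlier step has already collapsed: there is no individual one-dimensional tiling to which Corollary~\ref{structure-1d} applies. The fallback you propose --- a compactness/pigeonhole argument on the ``finite-state system'' of coupled cosets --- is not a proof as stated, and more seriously cannot deliver the polynomial bound: a pigeonhole along the transverse direction produces a period that is exponential in the number of states, hence exponential in $\mathrm{diam}(F)$. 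The paper even notes explicitly (just after the statement of Theorem~\ref{period-quant}) that pigeonhole gives only exponential-type bounds.

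The paper's route avoids both problems by working not coset-by-coset on $\Lambda$ but direction-by-direction: group the cosets according to their direction $h_j$ to write $A=A_1\uplus\cdots\uplus A_m$ with each $A_j$ one-periodic in direction $h_j$. The first key idea (Theorem~\ref{improv-ii}) is that applying the same discrete-differentiation trick used for Proposition~\ref{pxj-prop}(ii) to the \emph{real-valued} identity $\sum_j \one_F*\one_{A_j}=\one_E$ kills all but one term and forces each partial level function $\one_F*\one_{A_j}$ to be \emph{doubly} periodic with controlled period --- this is what decouples the directions. The second key idea is a slicing lemma (Lemma~\ref{slice}, Corollary~\ref{improv}), another application of the dilation lemma rather than pigeonhole, which constructively replaces each one-periodic $A_j$ by a doubly periodic $A'_j$ with $\one_F*\one_{A'_j}=\one_F*\one_{A_j}$; polynomial control on the period comes for free from the dilation bounds. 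Disjointness of the $A'_j$ then follows from $\sum_j \one_F*\one_{A'_j}=\one_E\le 1$, and $A'=\bigcup_j A'_j$ is the desired periodic tiling. Neither of these two ingredients appears in your outline, and both are needed for the claimed bound.
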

	
	This theorem has the following bound on the computational complexity of deciding whether a given finite set $F$ is a tile, which is of exponential type in the diameter of $F$ if $|F|$ is bounded:
	
	\begin{corollary}[Computational complexity bound for planar tiling]\label{ccb}  There is an algorithm which, when given a finite subset $F$ of $\Z^2$ as input, decides whether $F$ can tile $\Z^2$ in time $O_{|F|}( \exp( O_{|F|}( \mathrm{diam}(F)^{O(|F|^4)} ) ) )$ (counting each arithmetic operation as costing time $O(1)$).
	\end{corollary}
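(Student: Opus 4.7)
The plan is to apply Wang's standard reduction, using the quantitative periodicity bound from \thmref{period-quant} as the key input. The idea is that \thmref{period-quant} reduces the tiling problem to a finite search: if $F$ tiles $\Z^2$ at all, it must do so by some $M\Z^2$-periodic tiling with $M \ll_{|F|} \mathrm{diam}(F)^{O(|F|^4)}$, and conversely the existence of any tiling is equivalent to the existence of such a periodic one.

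First I would, given input $F$, compute the explicit upper bound $M_0 = M_0(F) \ll_{|F|} \mathrm{diam}(F)^{O(|F|^4)}$ coming from \thmref{period-quant} (applied with $\ell=1$ and $E=\Z^2$). Then I would enumerate all candidate periodic tiling sets $A'$ with period lattice $M_0\Z^2$: such an $A'$ is determined by its intersection with a fundamental domain of $M_0 \Z^2$, so there are at most $2^{M_0^2}$ candidates. For each candidate I would verify the identity $\one_F * \one_{A'} = 1$; by periodicity of both sides (after noting $\diam(F) \leq M_0$) it suffices to check this identity at each of the $M_0^2$ points of one fundamental domain, and each check costs $O(|F|)$ arithmetic operations. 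The algorithm outputs ``yes'' if and only if some candidate passes the test.

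The correctness follows from \thmref{period-quant}: if $F$ tiles $\Z^2$, the theorem guarantees a periodic tiling of period dividing $M_0 \Z^2$, so some candidate will pass; conversely, if any candidate passes then $F$ tiles $\Z^2$. The total running time is $2^{M_0^2} \cdot O(M_0^2 \cdot |F|) = O_{|F|}\!\bigl(\exp(O_{|F|}(\mathrm{diam}(F)^{O(|F|^4)}))\bigr)$, since squaring the polynomial bound on $M_0$ only changes the implied constant inside the exponent.

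There is no serious obstacle here once \thmref{period-quant} is in hand; the only care needed is to make the constants in the asymptotic notation explicit enough that the enumeration bound $M_0$ is actually computable from $F$ by the algorithm (which is immediate from the proof of \thmref{period-quant}), and to note that deciding tiling by exhaustive search over the $M_0 \Z^2$-periodic subsets is legitimate because nonexistence of any tiling automatically rules out the existence of a periodic one.
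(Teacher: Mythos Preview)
Your proposal is correct and follows essentially the same approach as the paper: invoke \thmref{period-quant} to bound the period, enumerate the at most $2^{M_0^2}$ periodic candidates, and verify each in time polynomial in $M_0$. The paper's proof is terser (it does not spell out computability of the bound or the per-point verification cost), but the argument is the same; your parenthetical ``$\diam(F)\le M_0$'' is unnecessary, since $\one_F*\one_{A'}$ inherits $M_0\Z^2$-periodicity from $A'$ regardless of the size of $F$.
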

	
	\begin{proof}  By Theorem \ref{period-quant}, it suffices to check tilings $A$ that are $M\Z^2$-periodic for some $M \ll_{|F|} \mathrm{diam}(F)^{O(|F|^4)}$.  Each such tiling can be checked in time $O_{|F|}(M^2)$, and the number of $M\Z^2$-periodic tilings is at most $2^{M^2}$ for each $M$, giving the claim.
	\end{proof}
	
	Note that the results in \cite{BH} established that the tiling problem in $\Z^2$ was decidable, but gave no bound on the computational complexity. The proof of Corollary \ref{ccb} also shows that for fixed $|F|$, the decision problem is in the complexity class $NP$ with respect to the diameter $\mathrm{diam}(F)$; for instance, in the unlikely event that $P=NP$, the decision problem could now be performed in time polynomial in the diameter.  It seems of interest to see if the exponential bound can be improved without the $P=NP$ hypothesis in the regime when $|F|$ is bounded.  In one dimension the best complexity bound currently known is $\exp(O_\eps(\mathrm{diam}(F)^{1/3+\eps}))$ for any $\eps>0$, due to B\'ir\'o \cite{biro}.
	
	We now discuss further the proof of Theorem \ref{main-quant}.  Our starting point is the following quite explicit structural theorem valid in all dimensions and levels, which we believe to be of independent interest:
	
	\begin{thm}[Structure of tilings]\label{structure}  Let $d \geq 1$, let $F$ be a finite subset of $\Z^d$, and let $A \subset \Z^d$ be a set such that $\one_F * \one_A$ is $\ell \Z^d$-periodic for some $\ell \geq 1$.  We normalize $0 \in F$.  Then there exists a decomposition
		\begin{equation}\label{afd}
		\one_A = \one_F * \one_A - \sum_{f \in F \backslash \{0\}} \varphi_f
		\end{equation}
		where for each $f \in F \backslash \{0\}$, $\varphi_f \colon \Z^d \to [0,1]$ is a function which is $\langle qf\rangle$-periodic, where $q$ is the least common multiple of $\ell$ and all the primes less than or equal to $2|F|$.
	\end{thm}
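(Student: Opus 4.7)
Plan:

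The desired decomposition is equivalent to writing $g - \one_A = \sum_{f \in F \setminus \{0\}} \varphi_f$ with each $\varphi_f \colon \Z^d \to [0,1]$ being $\langle qf \rangle$-periodic, where $g := \one_F * \one_A$. Indeed, using $0 \in F$, we have $g - \one_A = \sum_{f \in F \setminus \{0\}} \one_A(\cdot - f)$, which takes values in $\{0, 1, \dots, |F| - 1\}$; so the task is to distribute this non-negative integer-valued function into $|F| - 1$ slots, with the $f$-th slot being $\langle qf \rangle$-periodic and taking values in $[0, 1]$. The average ``budget'' per slot is exactly $(|F| - 1)/(|F| - 1) = 1$, which is consistent with the required range.

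My approach is Fourier-analytic. The hypothesis that $g$ is $\ell \Z^d$-periodic forces the product $\hat{\one}_F(\xi)\,\hat{\one}_A(\xi)$ to be supported on the dual lattice $\tfrac{1}{\ell}\Z^d / \Z^d$. Hence at any frequency $\xi$ lying off this sublattice where $\hat{\one}_A(\xi) \ne 0$, we must have $\hat{\one}_F(\xi) = \sum_{f \in F} e^{-2\pi i \xi \cdot f} = 0$, a vanishing sum of $|F|$ complex numbers of modulus $1$. A theorem of Lam--Leung type on vanishing sums of roots of unity (applied, likely, to a symmetrized quantity such as $|\hat{\one}_F|^2 = \hat{\one}_{F-F}$, whose support is of size at most $2|F| - 1$, which accounts for the factor of $2$) then tells us that every such vanishing forces $\xi \cdot f \in \tfrac{1}{p}\Z \setminus \Z$ for some $f \in F \setminus \{0\}$ and some prime $p \le 2|F|$. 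Since $p \mid q$ and $\ell \mid q$ by construction, each such frequency $\xi$ is compatible with a $\langle qf \rangle$-periodic contribution in direction $f$.

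This covering lets me split the Fourier spectrum of $g - \one_A$ among the directions $f \in F \setminus \{0\}$ and produce a first-pass decomposition $g - \one_A = \sum_f \tilde{\varphi}_f$ with each $\tilde{\varphi}_f$ real-valued and $\langle qf \rangle$-periodic. The main obstacle is then to refine this into a decomposition with each slot actually lying in $[0,1]$: any Fourier-analytic splitting produces arbitrary-sign real functions. Here I would exploit the freedom to add fully $q\Z^d$-periodic corrections $c_f$ with $\sum_f c_f = 0$ without spoiling the one-dimensional periodicity, combined with the integrality of $\one_A$ and the pointwise bound $0 \le g - \one_A \le |F| - 1$, to set up a finite feasibility problem on the compact group $(\Z/q\Z)^d$; this problem should then be solved by an LP-duality or convex combination argument. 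This refinement step (turning a real-valued decomposition into one with each term in $[0,1]$) is the place I expect the proof to be most delicate, since it requires using the $\{0,1\}$-valuedness of $\one_A$ in an essential non-spectral way.
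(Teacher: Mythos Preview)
Your plan diverges from the paper's argument, and its first step contains a genuine gap. You want to conclude from $\hat{\one}_F(\xi)=0$ that $\xi\cdot f\in\tfrac1p\Z$ for some $f\in F\setminus\{0\}$ and some prime $p\le 2|F|$. But Lam--Leung type results concern vanishing sums of \emph{roots of unity}; nothing in your setup forces the numbers $e^{-2\pi i\xi\cdot f}$ to be roots of unity, since $\xi$ ranges over the full torus. In dimension $d\ge 2$ the zero locus $\{\hat{\one}_F=0\}$ is typically a genuine real hypersurface, not contained in the finite union $\bigcup_f(qf)^\perp$. The containment that \emph{is} true is the weaker one $\operatorname{supp}\hat{\one}_A\subseteq(\tfrac1\ell\Z/\Z)^d\cup\bigcup_f(qf)^\perp$, but in the paper this is derived as a \emph{consequence} of the theorem (see the Remark following it), not as an input; establishing it directly would require using the structure of $A$, not merely of $F$. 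Your second step---repairing a signed real-valued decomposition into one with each summand in $[0,1]$---is also left as a hope (``LP-duality or convex combination''), and the authors themselves report that the Fourier route was less tractable here than the physical-space one.

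The paper's proof sidesteps both issues at once. Their dilation lemma (Lemma~3.1(iii)) gives $\one_{rF}*\one_A=\one_F*\one_A$ for every $r\equiv1\pmod q$; since $0\in F$ this rewrites as
\[
\one_A \;=\; \one_F*\one_A \;-\; \sum_{f\in F\setminus\{0\}} \one_A(\cdot-rf).
\]
Averaging over $r=1+nq$, $n=1,\dots,N$, the $f$-th term becomes $\varphi_{f,N}(x)=\tfrac1N\sum_{n=1}^N\one_A\bigl(x-(1+nq)f\bigr)$, which is already in $[0,1]$ as an average of indicator values, and obeys the telescoping bound $|\varphi_{f,N}(x+qf)-\varphi_{f,N}(x)|\le 2/N$. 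Passing to a subsequential limit (Arzel\`a--Ascoli) yields $\varphi_f\colon\Z^d\to[0,1]$ that is exactly $\langle qf\rangle$-periodic. Thus the $[0,1]$ range and the one-directional periodicity come for free from interpreting $\varphi_f(x)$ as a limiting density of $A$ along the arithmetic progression $\{x-(1+nq)f:n\ge 1\}$; no spectral splitting or after-the-fact range correction is needed.
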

	
	We establish this result in Section \ref{dilation-sec}.  It is a consequence of (a generalization of) the dilation lemma from \cite{BH}, which roughly speaking asserts that if a set (or multi-set) $A$ is a tiling for a tile $F$, then it is also a tiling for all dilations $rF$ of that tile, as long as $r$ is congruent to $1$ with respect to a suitable modulus.  This fact is number-theoretic in nature, ultimately boiling down to the Frobenius identity $(x+y)^p = x^p + y^p$ that is valid in any commutative ring of characteristic $p$.  Theorem \ref{structure} is then established by averaging over all such dilations $r$. 
	
	\begin{remark} Theorem \ref{structure} has the Fourier-analytic consequence that the distributional Fourier transform $\ft\one_A$ of $A$, which is a distribution on the torus $(\R/\Z)^d$, is supported on the union of the finite subgroup $\left(\frac{1}{\ell} \Z/\Z \right)^d$ and the codimension one subgroups $(qf)^\perp \coloneqq \{ \xi \in (\R/\Z)^d \colon qf \cdot \xi =0\}$ for $f \in F \backslash \{0\}$.  A qualitatively similar conclusion\footnote{We also note a possibly related result of Granville and Rudnick \cite[Corollary 3.1]{GR}, which in our notation states that if $f \colon \Z^d \to \Z$ is compactly supported and not identically zero (e.g., if $f = \one_F$ for some finite non-empty $F \subset \Z^d$), then the set $\{ \xi \in (\Q/\Z)^d: \hat f(\xi)=0\}$ of roots of unity in the zero set of $\hat f$ can be placed inside the union of a finite number of explicitly computable codimension one subgroups; this implies a special case of a conjecture of Lang \cite{Lang}.  We thank the anonymous referee for this reference.} regarding the spectral measure of a measure-preserving system associated to a tiling was obtained in \cite[Lemma 3.2]{BH}.  Our initial arguments relied heavily on this Fourier analytic structure, but we found eventually that physical-space arguments were simpler and gave superior bounds to those relying on the Fourier transform. Furthermore, the physical-space approach we developed provided us with more structural data, which in particular allowed us to gain better understanding of the rigidity of  tiling structures in $\Z^2$.
	\end{remark}
	
	Theorem \ref{structure} already resembles an assertion of weak periodicity of $\one_A$, except that the terms on the right-hand side of \eqref{afd}
	are not indicator functions.  Nevertheless, the structural theorem turns out to be particularly powerful in the case of level one tilings, when it imposes a powerful pointwise constraint 
	\begin{equation}\label{const}
	\sum_{f \in F \backslash \{0\}} \varphi_f \leq 1
	\end{equation}
	on the functions $\varphi_f$.  Furthermore, by working modulo $1$ to eliminate the $\one_A$ and $\one_F * \one_A$ terms from \eqref{afd}, we also have the important identity
	$$ \sum_{f \in F \backslash \{0\}} \varphi_f = 0 \hbox{ mod } 1.$$
	In dimension two, one can apply discrete differentiation operators, exploiting the partial periodicity of the $\varphi_f$ to conclude that the functions $\varphi_f \hbox{ mod } 1$ are polynomials (after collecting commensurable terms).  The classical Weyl equidistribution theory of these polynomials then asserts that these functions are either periodic or equidistributed in the unit circle.  The powerful constraint \eqref{const} lets us eliminate the equidistributed case, and some further elementary arguments (involving linear algebra facts such as Cramer's rule) then allow us to conclude Theorem \ref{main-quant}.
	
	The proof of Theorem \ref{period-quant} also proceeds by exploiting the dilation lemma, though in a simpler fashion (there is no need to work modulo $1$ in this case).  One could also establish results similar to Theorem \ref{period-quant} but with weaker bounds (of exponential type in the diameter rather than polynomial) by using the pigeonhole principle instead of the dilation lemma, but we do not present these arguments here.
	
	As a further application of our structural results, we establish in Corollary \ref{structure-1d} an explicit formula for a universal period for all tilings $A$ of a given one-dimensional tile $F \subset \Z$, which (remarkably) is only of polynomial size in the diameter, as opposed to exponential, in the regime where the cardinality $|F|$ of the tile is bounded.
	
	Our results leave open the question of whether the analogue of Conjecture \ref{periodic-conj} for higher level (i.e., whether any tile $F$ that admits a level $k$ tiling, also admits a level $k$ tiling by a periodic set) is true in two dimensions; neither our positive or negative results seem strong enough to resolve this question.  In the one dimensional lattice the claim easily follows from the pigeonhole principle (or from Corollary \ref{structure-1d} below), which forces all tiling sets at any level to be periodic.  On the other hand, on the continuous line $\R$ an example was given in \cite{k-l} of an $L^1(\R)$ function of unbounded support that tiled $\R$ by a set which was not the finite union of periodic sets.  We also mention the recent result of Liu \cite{liu} that if a function $f \in L^1(\R^d)$ tiles by a finite union of lattices at some level, then it also tiles by a single lattice at a possibly different level. We refer the reader to the recent survey \cite{kl-survey} for further discussion of tiling results in $\R$ and $\R^d$.
	
	We plan to investigate the applications of this theory to higher-dimensional lattice tilings in subsequent work.
	
	\subsection{Organization of the paper}

	In  \secref{counter-sec}, we present a constructive proof of \thmref{main}(ii). This section is self-contained and will not be used in what follows.  
	In \secref{dilation-sec}, we develop an approach to study tiling structures in $\Z^d$ and prove our structure theorem, \thmref{structure}. The proof relies on  \lemref{dilation}, which we refer to as the ``dilation lemma''. As a direct corollary of our structure theorem, we obtain  an explicit universal period of one-dimensional tilings, of polynomial  size in the diameter of the tile.
	In Sections \ref{weak-sec}, \ref{periodic-sec} and \ref{oneper-sec}, we apply the results of \secref{dilation-sec} to level one tilings of periodic sets  in $\Z^2$.  In more detail:
	
	 In \secref{weak-sec}, using  polynomial sequences (based on \cite{BH}), we prove  \thmref{main-quant}, which is a quantitative version of \thmref{main}(i). 
	In \secref{periodic-sec}, we prove \thmref{period-quant} by combining the former presented results with a ``slicing lemma''. 
	In \secref{oneper-sec}, by combining \thmref{main-quant} and the results in \secref{periodic-sec}, we establish a satisfactory description of the structure of any tiling of level one of a periodic set in $\Z^2$. In particular, we show that the question whether a tile $F\subset\Z^2$ admits non one-periodic tiling, is decidable.

	\subsection{Notation}
	
	If $F \subset \Z^d$ is a set and $r$ is a natural number, we use $rF \coloneqq \{rf: f \in F \}$ to denote the dilation of $F$ by $r$.  In particular we have the lattices $r \Z^d = \{ r n: n \in \Z^d \}$.  
	We use $|F|$ to denote the cardinality of $F$; by abuse of notation we also use $|z|$ to denote the magnitude of a real or complex number $z$. 
	
	We use $x \mapsto x \hbox{ mod } 1$ to denote the projection homomorphism from $\R$ to the (additive) unit circle $\R/\Z$.  A \emph{polynomial of degree at most $d$} from $\Z$ to $\R/\Z$ is a map $P \colon \Z \to \R/\Z$ of the form
	$$ P(n) = \alpha_0 + \alpha_1 n + \dots + \alpha_d n^d$$
	for some $\alpha_0,\dots,\alpha_d \in \R/\Z$.
	
	Two vectors $h_1,h_2 \in \Z^d$ are said to be \emph{commensurable} if one is a scalar multiple of the other, and \emph{incommensurable} otherwise.  In the two-dimensional case $d=2$, we define the wedge product $h_1 \wedge h_2$ to be the determinant of the $2 \times 2$ matrix with rows $h_1,h_2$; thus $h_1,h_2$ are commensurable if and only if $h_1 \wedge h_2 = 0$.  If $h_1,h_2$ are incommensurable, we observe the \emph{Cramer rule}
	\begin{equation}\label{cramer}
	v = \frac{v \wedge h_2}{h_1 \wedge h_2} h_1 + \frac{v \wedge h_1}{h_2 \wedge h_1} h_2
	\end{equation}
	for any $v \in \Z^2$ (this is easily verified by first testing the cases $v=h_1,h_2$, then extending by linearity).  In particular, if we let $\langle h_1,h_2\rangle$ denote the lattice generated by $h_1,h_2$, we have the inclusion
	\begin{equation}\label{cramer-include}
	|h_1 \wedge h_2| \Z^2 \subset \langle h_1, h_2 \rangle.
	\end{equation}
	This inclusion can also be established from Lagrange's theorem, since $|h_1 \wedge h_2|$ is the index of $\langle h_1,h_2 \rangle$.  From Hadamard's inequality one has
	$$ |h_1 \wedge h_2| \leq \|h_1\| \|h_2\|$$
	where $\|h\|$ denotes the Euclidean length of an element $h$ of $\Z^d$.
	
	An element $h \in \Z^d$ is said to be \emph{primitive} if it cannot be written as $h = mh'$ for some $h' \in \Z^d$ and some integer $m>1$.  Note that every non-zero element $h$ of $\Z^d$ can be uniquely expressed as $h = mh'$ where $m$ is a natural number and $h' \in \Z^d$ is primitive.
	
	If $h \in \Z^d$, we use $\delta_h = \one_{\{h\}}$ to denote the Kronecker delta function at $h$, and let $\Delta_h$ denote the discrete differentiation operator
	$$ \Delta_h f(x) \coloneqq (\delta_0 - \delta_h) * f(x) = f(x) - f(x-h)$$
	in the direction $h$ applied to a function  $f \colon \Z^d \to \R$ (or $f \colon \Z^d \to \R/\Z$) at a location $x \in \Z^d$. Note that these operators $\Delta_h$ commute with each other and with convolution by any additional function $g$: 
	$$\Delta_h (f*g) = (\Delta_h f) * g = f * \Delta_h g.$$

	\section{The counterexample}\label{counter-sec}
	
	In this section we prove Theorem \ref{main}(ii).  The constructions here are not used elsewhere in the paper; however, the analysis presented in the rest of the paper was what led us to the counterexample presented here.
	
	We first need to locate a sign pattern on $\Z^2$ that obeys certain cancellation properties.
	
	\begin{lem}\label{chi-lem}
		There exists a $4\Z^2$-periodic function $\chi:\Z^2\to\{-1,1\}$ such that one has the cancellations
		\begin{align}
		\chi * \one_{\{(0,0), (0,2)\}} &= 0 \label{chi-1} \\
		\chi * \one_{\{(0,0), (1,0)\}} &= 0. 
		\label{chi-2} 
		\end{align}
	\end{lem}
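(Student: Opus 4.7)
My plan is to construct $\chi$ explicitly as a tensor product of two one-dimensional sign patterns, exploiting the fact that the two cancellation conditions involve disjoint coordinate directions.

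First I would rewrite the conditions in pointwise form. Since $(\chi*\one_S)(x)=\sum_{s\in S}\chi(x-s)$, the identity \eqref{chi-1} amounts to the antiperiodicity $\chi(n,m+2)=-\chi(n,m)$ and \eqref{chi-2} amounts to $\chi(n+1,m)=-\chi(n,m)$ for all $(n,m)\in\Z^2$. Iterating these gives $\chi(n+4,m)=\chi(n,m)$ and $\chi(n,m+4)=\chi(n,m)$, so any $\chi$ satisfying both antiperiodicities is automatically $4\Z^2$-periodic, and $4\Z^2$-periodicity will not clash with the cancellation conditions.

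Next I would separate variables and define
$$\chi(n,m)\coloneqq a(n)\,b(m),$$
where $a\colon\Z\to\{-1,1\}$ is chosen to be antiperiodic with period $1$ and $b\colon\Z\to\{-1,1\}$ antiperiodic with period $2$. The canonical choices are
$$a(n)\coloneqq(-1)^n,\qquad b(m)\coloneqq(-1)^{\lfloor m/2\rfloor},$$
so that $a(n+1)=-a(n)$ and $b(m+2)=-b(m)$. Then $\chi$ takes values in $\{-1,1\}$, and a direct check confirms $\chi(n+1,m)=-\chi(n,m)$ and $\chi(n,m+2)=-\chi(n,m)$, giving \eqref{chi-1} and \eqref{chi-2}. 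Moreover $a$ has period $4$ and $b$ has period $4$, hence $\chi$ is $4\Z^2$-periodic.

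This is a direct construction, so there is no real obstacle; the only thing to verify is that the two antiperiodicity conditions are consistent with one another and with the requirement of $4\Z^2$-periodicity, which they are because the relevant periods ($1$ and $2$) both divide $4$. One could equivalently phrase the construction Fourier-analytically as $\chi(n,m)=e^{\pi i n}\cdot e^{\pi i m/2}$ restricted to values in $\{-1,1\}$ (which forces $m$ to be even in the exponent via the floor), but the elementary tensor-product definition above is the most transparent.
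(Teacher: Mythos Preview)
Your construction $\chi(n,m)=(-1)^n(-1)^{\lfloor m/2\rfloor}$ is exactly the paper's function $(-1)^{\lfloor m_2/2\rfloor+m_1}$, just written as a product, and your derivation of the antiperiodicity conditions is a correct unpacking of the convolution identities. (Minor quibble: $a(n)=(-1)^n$ has period $2$, not $4$, but since $2\mid 4$ this does not affect the $4\Z^2$-periodicity conclusion.)
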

	
	\begin{proof}
		It is a routine matter to check that the function
		\begin{equation}
		\chi(m_1,m_2) \coloneqq (-1)^{\lfloor m_2/2 \rfloor + m_1}
		\end{equation} 
		satisfies the claimed properties, where $\lfloor\cdot \rfloor$ is the integer part function.
	\end{proof}
	
	Note that \eqref{chi-1} and \eqref{chi-2} imply 
	\begin{equation}\label{chi-3}
	    \chi * \one_{\{(0,0), (2,-2)\}} = 0
	\end{equation}
	since
	$$\one_{\{(0,0), (2,-2)\}} = \one_{\{(0,0), (0,2)\}} * \one_{\{(0,-2)\}} + \one_{\{(0,0), (1,0)\}} * (\one_{\{(1,-2)\}} - \one_{\{(0,-2)\}}).$$
	Let $\alpha$ be an arbitrary irrational number (e.g., $\alpha = \sqrt{2}$). Consider the function\footnote{The function $(m_1,m_2) \mapsto \{\alpha m_1\}+\{\alpha m_2\}-\{\alpha(m_1+m_2)\}$ was also mentioned in a closely related context in \cite[Example 4]{ks}, \cite[Example 3.4.6]{szabados}.}
	$$a(m_1,m_2) \coloneqq \chi(m_1,m_2)\left(\{\alpha m_1\}+\{\alpha m_2\}-\{\alpha(m_1+m_2)\}-1/2 \right) + 1/2$$
	on $\Z^2$, where $\chi$ is as in Lemma \ref{chi-lem} and $\{x\} = x - \lfloor x \rfloor \in [0,1)$ denotes the fractional part of $x\in\R$.  Observe that $\{ \alpha(m_1+m_2)\}$ is either equal to $\{\alpha m_1\} + \{\alpha m_2\}$ or $\{\alpha m_1\} + \{\alpha m_2\}- 1$, hence $a(m_1,m_2)$ takes values in $\{0,1\}$.  Hence this is the indicator function of some set $A \subset \Z^2$:
	$$\one_A(m_1,m_2) \coloneqq \chi(m_1,m_2)(\{\alpha m_1\}+\{\alpha m_2\}-\{\alpha(m_1+m_2)\}-1/2) + 1/2.$$
	Now introduce the eight-element tile
	$$F \coloneqq \{t_1 (0,2) + t_2 (1,0) + t_3 (2,-2): t_1,t_2,t_3 \in \{0,1\}\}.$$
	Note that we have the factorization
	$$ \one_F = \one_{\{(0,0), (0,2)\}} * \one_{\{(0,0), (1,0)\}} * \one_{\{(0,0), (2,-2)\}}.$$
	From \eqref{chi-1} we see that the functions $$(m_1,m_2) \mapsto \chi(m_1,m_2), \quad  (m_1,m_2) \mapsto \chi(m_1,m_2) \{ \alpha m_1\}$$ are annihilated by convolution with $\one_{\{(0,0), (0,2)\}}$.  Similarly, from \eqref{chi-2} we see that $$(m_1,m_2) \mapsto \chi(m_1,m_2) \{ \alpha m_2\}$$ is annihilated by convolution with $\one_{\{(0,0), (1,0)\}}$, and from \eqref{chi-3} we see that $$(m_1,m_2) \mapsto \chi(m_1,m_2) \{ \alpha(m_1+m_2)\}$$ is annihilated by convolution with $\one_{\{(0,0), (2,-2)\}}$.  Finally, we have $\one_F * 1 =\one_F*\one_{\Z^2}= |F| = 8$.  Using the bilinear, commutative, and associative properties of convolution, we conclude that
	$$ \one_F * \one_A = 4.$$
	
	It remains to show that $A$ is not weakly periodic.  Suppose for contradiction that we had a decomposition
	\begin{equation}\label{onea}
	\one_A = \one_{A_1} + \dots + \one_{A_m}
	\end{equation}
	where each $A_i$ is $\langle h_i \rangle$-periodic for some non-zero $h_i \in \Z^2$.  By repeatedly grouping together sets $A_i$ corresponding to commensurate $h_i$ and passing to least common multiples, we may assume that the $h_i$ are pairwise incommensurable.  In particular, at most one of the $h_i$ is commensurate with $(1,0)$; by relabeling we may assume that $h_i$ is incommensurate with $(1,0)$ for all $2 \leq i \leq m$; by adding a dummy index if necessary (and an empty set $A_1$) we may assume that $h_1$ is commensurate with $(1,0)$.  We now rearrange \eqref{onea} as
	\begin{equation}\label{chi-mm}
	\begin{split}
	\chi(m_1,m_2) \{ \alpha m_2 \} - \one_{A_1}(m_1,m_2)& = \sum_{i=2}^m \one_{A_i}(m_1,m_2) - \chi(m_1,m_2) \{ \alpha m_1\} \\
	&\quad + \chi(m_1,m_2) \{ \alpha (m_1+m_2) \} + \tfrac{1}{2} \chi(m_1,m_2) - \tfrac{1}{2}.
	\end{split}
	\end{equation}
	The two terms on the left-hand side of \eqref{chi-mm} are $\langle (4,0) \rangle$-periodic and $\langle h_1 \rangle$-periodic respectively, thus the left-hand side of \eqref{chi-mm} is $\langle 4h_1 \rangle$-periodic. Meanwhile, each of the terms on the right-hand side of \eqref{chi-mm} is $\langle h \rangle$-periodic with respect to some $h$ incommensurate with $(1,0)$ and hence with  $4h_1$.   Let $\tilde e \in \Z^2$ be a vector incommensurate with all of these periods $h$.  Then we may find an integer multiple $e = N \tilde e$ of $\tilde e$ which lies in $\langle 4h_1, h \rangle$ for all the periods $h$ on the right-hand side of \eqref{chi-mm}, thus one has a decomposition
	$$ e = a_h (4h_1) + b_h h$$
	for each such $h$.  Applying the discrete differentiation operator $\Delta_{e - a_h(4h_1)}$ then annihilates any term on the right-hand side of \eqref{chi-mm} that is $\langle h \rangle$-periodic, and the operator is equivalent to $\Delta_e$ when applied to the left-hand side of \eqref{chi-mm}.  Applying enough of these discrete differentiation operators to annihilate the entire right-hand side of \eqref{chi-mm}, we conclude that
	$$ \Delta_e^k ( \chi(m_1,m_2) \{ \alpha m_2 \} - \one_{A_1}(m_1,m_2) ) = 0$$
	for some integer $k$.  Thus, when evaluated on any coset of $\langle e \rangle=\{ne:n\in\Z\}$, the $k^{\mathrm{th}}$ discrete derivative of function 
	\begin{equation}\label{fun}
	\chi(m_1,m_2) \{ \alpha m_2 \} - \one_{A_1}(m_1,m_2)
	\end{equation} 
	vanishes. A simple induction on $k$ then shows that \eqref{fun} is a polynomial (of degree at most $k-1$); it is also bounded, hence it is constant. In other words, \eqref{fun} is $\langle e \rangle$-periodic.  As it is also $\langle 4h_1 \rangle$-periodic, it is in fact periodic, and thus attains at most finitely many values.  But this implies that $\{ \alpha m_2 \}$ attains at most finitely many values, contradicting the irrationality of $\alpha$.  This concludes the proof of Theorem \ref{main}(ii).
	
	\begin{remark}  Since $\{(0,0), (0,2)\}$ admits a periodic tiling of level $1$, $F$ admits a periodic tiling of level $4$.  Hence this example does not provide a counterexample to the higher level version of Conjecture \ref{periodic-conj}, which remains open even in two dimensions.
	\end{remark}
	
	\section{A dilation lemma and the structure theorem}\label{dilation-sec}
	
	In \cite[Proposition 3.1]{BH}, some elementary commutative algebra was used to establish a dilation lemma that  asserted, roughly speaking, that if $A$ was a (multi-set) tiling of $\Z^d$ of a tile $F$ at some level $k$, then $A$ was also a tiling of the dilate $rF$ for an arithmetic progression of $r$'s.  A one-dimensional version of this lemma previously appeared in \cite{tijdeman}; see also \cite[Proposition 3.2]{hiprv} (or \cite[Theorem 3.3]{imp}) for a related Fourier-analytic dilation lemma for tilings of $\mathbb{F}_p^d$.  Variants of this lemma were also established in \cite[Lemma 10]{szegedy} and \cite[Lemma 2]{ks}.  We re-prove this lemma using elementary number theory, and generalize it from tilings of $\Z^d$ to tilings of periodic level functions.
	
	\begin{lem}[Dilation lemma]\label{dilation}  Let $F$ be a finite subset of $\Z^d$ for some $d \geq 1$, and let $g \colon \Z^d \to \Z$ be a bounded function.
		\begin{enumerate-math}
			\item  If $\one_F * g = k$ for some integer $k$, then for any prime $p$ with $p > (\sup g - \inf g) |F|$, one has $\one_{pF} * g = k$.
			\item If $\one_F * g = k$ for some integer $k$, and $q$ is the product of all primes less than or equal to $(\sup g - \inf g) |F|$, then one has $\one_{rF} * g = k$ whenever $r$ is a natural number coprime to $q$.
			\item  If $\one_F * g$ is $\ell \Z^d$-periodic for some $\ell \geq 1$, and $q$ is the least common multiple of $\ell$ and all primes less than or equal to $2(\sup g - \inf g) |F|$, then $\one_{rF} * g = \one_F * g$
			whenever $r$ is a natural number with $r = 1 \hbox{ mod } q$.
		\end{enumerate-math}
	\end{lem}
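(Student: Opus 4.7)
The three parts rest on a single number-theoretic identity: the Frobenius congruence $\one_F^{*p} \equiv \one_{pF} \pmod p$ in the integral group algebra $\Z[\Z^d]$, a direct consequence of the multinomial theorem combined with $p \mid \binom{p}{j}$ for $0 < j < p$. For part (i), I would convolve with $g$ to obtain
\[
\one_{pF} * g \equiv \one_F^{*p} * g = \one_F^{*(p-1)} * k = k\,|F|^{p-1} \equiv k \pmod p,
\]
where the final step uses Fermat's little theorem, valid because $p > |F|$. Both $\one_{pF} * g$ and $k$ take values in $[|F|\inf g, |F|\sup g]$, so the integer difference has magnitude at most $|F|(\sup g - \inf g) < p$; being congruent to $0$ modulo $p$ it must vanish. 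Part (ii) then follows by iterating (i): factor $r = p_1 p_2 \cdots p_n$ into primes each exceeding $(\sup g - \inf g)|F|$, and apply (i) at each step, noting that dilation preserves $|F|$ and the range of $g$ so the threshold condition is maintained throughout.

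For part (iii), I would proceed in two stages. Stage (A) uses a differencing trick: for each $s \in \ell\Z^d$, the function $\Delta_s g$ satisfies $\one_F * \Delta_s g = \Delta_s(\one_F * g) = 0$ by the $\ell$-periodicity of $G := \one_F * g$, and has range $\sup \Delta_s g - \inf \Delta_s g \leq 2(\sup g - \inf g)$. Applying (ii) to $\Delta_s g$, whose prime threshold $2(\sup g - \inf g)|F|$ matches exactly the primes appearing in $q$, yields $\one_{rF} * \Delta_s g = 0$ for all $r \equiv 1 \pmod q$ and all $s \in \ell\Z^d$---equivalently, $\one_{rF} * g$ is $\ell\Z^d$-periodic. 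Setting $h_r := \one_{rF} * g - G$, one then checks that $\one_F * h_r = (\one_{rF} - \one_F) * G = 0$, since $(r-1)F \subset \ell\Z^d$ and $G$ is $\ell$-periodic, while $h_r$ is $\ell$-periodic with pointwise bound $|h_r| \leq |F|(\sup g - \inf g)$.

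The main obstacle is stage (B): upgrading these three properties of $h_r$ (periodicity, pointwise smallness, annihilation by $\one_F *$) to the equality $h_r = 0$, since the kernel of $\one_F *$ on $\ell$-periodic functions is generally nontrivial. My plan is to exploit the explicit decomposition
\[
h_r(x) = -\sum_{f \in F} \Delta_{(r-1)f} g(x-f),
\]
in which each summand $\Delta_{(r-1)f} g$ is annihilated by $\one_F *$ individually (since $(r-1)f \in \ell\Z^d$), and then invoke the Frobenius identity $X^{(r-1)f} - 1 \equiv (X^{(r-1)f/p} - 1)^p \pmod p$ in $\mathbb{F}_p[\Z^d]$---valid because $p \mid q \mid r-1$---for each prime $p$ dividing $q$ to produce congruences $h_r \equiv 0 \pmod p$. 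The pointwise bound $|h_r| \leq |F|(\sup g - \inf g)$ is dwarfed by the product of all primes dividing $q$, so combining these congruences via the Chinese remainder theorem forces $h_r = 0$. I expect that properly executing this last modular reduction---which must use the specific form of $h_r$ rather than just the abstract kernel relation---will be the most delicate step of the argument.
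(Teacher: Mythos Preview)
Your proofs of (i) and (ii) are correct and essentially identical to the paper's. For (iii), your Stage (A)—applying (ii) to each $\Delta_s g$ with $s \in \ell\Z^d$ to conclude that $h_r := \one_{rF}*g - \one_F*g$ is $\ell\Z^d$-periodic—also matches the paper exactly.

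The gap is in Stage (B). Your plan is to invoke the congruence $\Delta_{(r-1)f} \equiv (\Delta_{(r-1)f/p})^{*p} \pmod p$ and thereby deduce $h_r \equiv 0 \pmod p$ for each prime $p \mid q$, but this inference does not go through. The Frobenius identity merely rewrites the difference operator as a $p$-fold convolution power; it does not make anything vanish. What you actually have is the exact identity $\one_F * \Delta_{(r-1)f} g = 0$, and reducing it mod $p$ just gives $\one_F * (\Delta_{(r-1)f/p})^{*p} * g \equiv 0$, which is the same relation in different clothing. There is no mechanism here to strip off the $\one_F *$ and conclude that $\Delta_{(r-1)f} g$, or $h_r$, is $\equiv 0 \pmod p$. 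As you yourself acknowledge, the kernel of $\one_F *$ on $\ell$-periodic functions is nontrivial, and nothing in your sketch extracts from $h_r$ any structure beyond membership in that kernel together with a pointwise bound.

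The paper's resolution of Stage (B) is entirely different and much simpler; no further modular arithmetic is used. Since $h_r = (\one_{rF} - \one_F)*g$ is $\ell\Z^d$-periodic, convolving with the normalized indicator $\tfrac{1}{|B_R|}\one_{B_R}$ of a F{\o}lner sequence $B_R \subset \ell\Z^d$ leaves it unchanged:
\[
h_r \;=\; (\one_{rF} - \one_F) * \tfrac{1}{|B_R|}\one_{B_R} * g \qquad \text{for every } R.
\]
But for each $f \in F$ one has $rf - f \in \ell\Z^d$ (since $\ell \mid q \mid r-1$), so the F{\o}lner property gives $\bigl\|(\delta_{rf} - \delta_f) * \tfrac{1}{|B_R|}\one_{B_R}\bigr\|_{\ell^1} \to 0$; by Young's inequality and the boundedness of $g$ the right-hand side tends to $0$ in $\ell^\infty$, whence $h_r = 0$.
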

		
	\begin{proof}  We begin with (i).  The claim is easily verified when $g$ is constant, so we may assume that $g$ is non-constant, in particular $p > |F|$.  We convolve the equation $\one_F * g = k$ by $p-1$ further copies of $\one_F$ using the identity $ \one_F*1 = |F|$ to conclude that
		$$ (\one_F)^{*p} * g = |F|^{p-1} k$$
		where $(\one_F)^{*p}$ is the convolution of $p$ copies of $\one_F$.  As all functions here are integer-valued, this identity also holds modulo $p$:
		$$(\one_F)^{*p} * g = |F|^{p-1} k \hbox{ mod } p.$$
		By Fermat's little theorem we have $|F|^{p-1} = 1 \hbox{ mod } p$.  Also, from the binomial theorem\footnote{Alternatively, one can apply the Frobenius endomorphism $f \mapsto f^{*p}$ to the group algebra $\F_p[\Z^d]$, where $\F_p$ denotes the finite field of order $p$.} we have $(f+g)^{*p} = f^{*p} + g^{*p} \hbox{ mod } p$ for all finitely supported functions $f,g \colon \Z^d \to \Z$.  Iterating this observation and writing $\one_F = \sum_{f \in F} \delta_f$ as the sum of Kronecker delta functions $\delta_f$, we see that
		$$ (\one_F)^{*p} = \one_{pF} \hbox{ mod } p.$$
		We conclude that
		$$ \one_{pF} * g = \one_F * g \hbox{ mod } p$$
		or equivalently
		$$ (\one_{pF}-\one_F) * g = 0 \hbox{ mod } p.$$
		Observe that the left-hand side takes values in the integers of magnitude at most $(\sup g - \inf g) |F|$ (to show this, one can first shift $g$ by a constant to normalize $\inf g = 0$ if desired). By the assumption on $p$, we thus see that $(\one_{pF}-\one_F) * g = 0$, and (i) follows.
		
		To prove (ii), observe from the fundamental theorem of arithmetic that any $r = 1 \hbox{ mod } q$ is the product of a finite number of primes $p > (\sup g - \inf g) |F|$ (possibly with repetition).  The claim then follows by iterating (i).
		
		Finally, we prove (iii).  If $\one_F * g$ is $\ell \Z^d$-periodic, then for any $\lambda \in \ell \Z^d$ we have
		$$ \one_F * \Delta_\lambda g = \Delta_\lambda( \one_F * g ) = 0.$$
		The discrete derivative $\Delta_\lambda g$ takes values in the integers of magnitude at most $\sup g - \inf g$.  Applying (ii), we conclude that
		$$ \one_{rF} * \Delta_\lambda g = 0$$
		whenever $\lambda \in \ell \Z^d$ and $r = 1 \hbox{ mod } q$.  Equivalently, $\one_{rF} * g$ is $\ell \Z^d$-periodic for all $r = 1 \hbox{ mod } q$, which implies that
		$$ (\one_{rF} - \one_F) * g$$
		is $\ell \Z^d$-periodic.  In particular, if $B_R$ denotes a F{\o}lner sequence on $\ell \Z^d$ (for instance one can take $B_R \coloneqq \ell \{-R,\dots,R\}^d$), then we have
		\begin{equation}\label{br}
		(\one_{rF} - \one_F) * g = (\one_{rF} - \one_F) * \frac{1}{|B_R|} \one_{B_R} * g
		\end{equation}
		for any $R>0$.  But as $r-1$ is a multiple of $\ell$, we have $rf - f \in \ell \Z^d$ for all $f \in F$.  From the F{\o}lner property we then have
		$$ \lim_{R \to \infty} \left\| (\delta_{rf} - \delta_f) * \frac{1}{|B_R|} \one_{B_R} \right\|_{\ell^1(\Z^d)} = 0$$
		for all $f$; from Young's inequality and the boundedness of $g$ we hence have
		$$ \lim_{R \to \infty} \left\| (\delta_{rf} - \delta_f) * \frac{1}{|B_R|} \one_{B_R} * g \right\|_{\ell^\infty(\Z^d)} = 0$$
		and hence by the triangle inequality
		$$ \lim_{R \to \infty} \left\| (1_{rF} - 1_F) * \frac{1}{|B_R|}  \one_{B_R} * g \right\|_{\ell^\infty(\Z^d)} = 0.$$
		Combining this with \eqref{br} one has $ (\one_{rF} - \one_F) * g=0$, which gives (iii).
	\end{proof}
	
	\begin{remark} The above proof shows that if the requirement $r = 1 \hbox{ mod } q$ in Lemma \ref{dilation}(iii) is relaxed to $r$ merely being coprime to $q$, then $\one_{rF} * g$ is no longer necessarily equal to $\one_F * g$, but will still be $\ell \Z^2$-periodic.  In \cite[Theorems 10, 13]{szegedy} it is also shown that if $\one_F * \one_A = 1$ then $\one_{-F} * \one_A = 1$, and also $\one_{rF} * \one_A \leq 1$ whenever $r$ is an integer coprime to $|F|$. (In fact one can improve the inequality $\one_{rF} * \one_A \leq 1$ to $\one_F * \one_A = 1$ by volume packing arguments; see, e.g., \cite[Lemma 3.2]{GL2}.)
	\end{remark}
	
	Now we can prove Theorem \ref{structure}.  
	
	\begin{proof}[Proof of \thmref{structure}] Let the notation and hypotheses be as in that theorem.  From Lemma \ref{dilation}(iii) we have
	$$ \one_{rF} * \one_A = \one_F * \one_A$$
	for all natural numbers $r$ with $r = 1 \hbox{ mod } q$, where $q$ being as in \lemref{dilation}(iii).  As $0 \in F$, we can rewrite this identity as
	$$ \one_A = \one_F * \one_A - \sum_{f \in F \backslash \{0\}} \delta_{rf} * \one_A.$$
	We can average this to obtain
	$$ \one_A = \one_F * \one_A - \sum_{f \in F \backslash \{0\}} \varphi_{f,N}$$
	for any natural number $N \geq 1$, where $\varphi_{f,N} \colon \Z^d \to [0,1]$ is the function
	$$ \varphi_{f,N} \coloneqq \frac{1}{N} \sum_{n=1}^N \delta_{(1+nq)f} * \one_A.$$
	It is clear that $\varphi_{f,N}$ takes values in $[0,1]$. Also from telescoping series we have
	\begin{equation}\label{vfn}
	|\varphi_{f,N}(x+qf) - \varphi_{f,N}(x)| \leq \frac{2}{N}
	\end{equation}
	for any $x \in \Z^d$.  By the Arzel\'a-Ascoli theorem, we can find a sequence $N_i \to \infty$ such that for every $f \in F \backslash \{0\}$, $\varphi_{f,N_i}$ converges locally uniformly to a limit $\varphi_f$, which then also takes values in $[0,1]$, and we now have
	$$ \one_A = \one_F * \one_A - \sum_{f \in F \backslash \{0\}} \varphi_{f}.$$
	Setting $N=N_i$ in \eqref{vfn} and taking limits, we conclude that $\varphi_f(x+qf)-\varphi_f(x)=0$ for all $x \in \Z^d$, thus $\varphi_f$ is $\langle qf \rangle$-periodic, and Theorem \ref{structure} follows.
	\end{proof}
	
	\begin{remark}  The above argument shows that one can interpret $\varphi_f(x)$ as a limiting density of $A$ along the ray $\{ x + (1+nq)f \colon n \in \N \}$.  A similar averaging argument (using ultrafilter limits instead of generalized limits) appears in \cite[Section 7.3]{szabados} in a related context,  under different assumptions.  
	\end{remark}
	
	For our application it is convenient to group together ``commensurable'' terms in Theorem \ref{structure}.
	
	\begin{thm}[Structure of tilings, II]\label{structure-2}  Let $d, \ell, k \geq 1$, let $F$ be a finite subset of $\Z^d$, let $E$ be an $\ell\Z^d$-periodic subset of $\Z^d$, and let $A$ be a tiling of $E$ by $F$ of level $k$.  We normalize $0 \in F$, and assume $|F|>1$.  Then there exists a decomposition
		\begin{equation}\label{vph}
		\one_A = k\one_E - \sum_{j=1}^m \varphi_j
		\end{equation}
		where $1 \leq m \leq |F|-1$, and each $\varphi_j \colon \Z^d \to [0,k]$ is $\langle qh_j \rangle$-periodic, where $q$ is the least common multiple of $\ell$ and all the primes less than or equal to $2|F|$, and $h_1,\dots,h_m$ are pairwise incommensurable elements of $\Z^d$ such that 
		\begin{equation}\label{h-prod}
		\prod_{j=1}^m \|h_j\| \leq \mathrm{diam}(F)^{|F|-1}.
		\end{equation}
		In particular, we have the upper bounds
		\begin{equation}\label{q-bound}
		q \ll_{|F|} \ell
		\end{equation}
		and
		\begin{equation}\label{level-1}
		\sum_{j=1}^m \varphi_j \leq k.
		\end{equation}
		Furthermore, each $h_j$ is an integer multiple of an element of $F \backslash \{0\}$.
	\end{thm}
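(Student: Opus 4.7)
The plan is to derive Theorem \ref{structure-2} directly from Theorem \ref{structure} by grouping together the $\varphi_f$'s associated to commensurable directions $f$. Since $\one_F * \one_A = k\one_E$ is $\ell\Z^d$-periodic, Theorem \ref{structure} gives
\[ \one_A = \one_F * \one_A - \sum_{f \in F\setminus\{0\}} \varphi_f = k\one_E - \sum_{f \in F\setminus\{0\}} \varphi_f, \]
where each $\varphi_f \colon \Z^d \to [0,1]$ is $\langle qf\rangle$-periodic with $q$ the least common multiple of $\ell$ and all primes $\leq 2|F|$.

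Next I would partition $F \setminus \{0\}$ into commensurability equivalence classes $C_1,\dots,C_m$, where $m \leq |F|-1$. For each class $C_j$ pick a primitive vector $f_0^{(j)}\in\Z^d$ along the common direction, so that every $f \in C_j$ admits a unique representation $f = a_f f_0^{(j)}$ with $a_f$ a nonzero integer (replacing $f_0^{(j)}$ by its negative if needed we can arrange $a_f > 0$). Then set
\[ h_j := \Bigl(\operatorname{lcm}_{f \in C_j} a_f\Bigr)\, f_0^{(j)}, \qquad \varphi_j := \sum_{f \in C_j} \varphi_f. \]
Since $qh_j \in \langle qf\rangle$ for every $f \in C_j$, each $\varphi_f$ with $f \in C_j$ is automatically $\langle qh_j\rangle$-periodic, and so is the sum $\varphi_j$. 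The $h_j$'s are pairwise incommensurable by construction; and for any chosen $f \in C_j$ one has $h_j = (\operatorname{lcm}_{f'\in C_j} a_{f'}/a_f)\cdot f$, verifying that $h_j$ is a positive integer multiple of an element of $F\setminus\{0\}$.

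For the product bound \eqref{h-prod}, I would estimate $\operatorname{lcm}_{f \in C_j} a_f \leq \prod_{f \in C_j} a_f$ and use $\|f_0^{(j)}\| \geq 1$ together with $\|f\| \leq \operatorname{diam}(F)$ (valid since $0 \in F$) to get
\[ \|h_j\| \leq \Bigl(\prod_{f \in C_j} a_f\Bigr)\|f_0^{(j)}\| = \frac{\prod_{f\in C_j}\|f\|}{\|f_0^{(j)}\|^{|C_j|-1}} \leq \prod_{f\in C_j}\|f\| \leq \operatorname{diam}(F)^{|C_j|}. \]
Taking the product over $j$ and using $\sum_j |C_j| = |F|-1$ yields \eqref{h-prod}. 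The bound \eqref{q-bound} follows from $q \leq \ell \prod_{p \leq 2|F|} p$ combined with Chebyshev's estimate on the primorial, and \eqref{level-1} follows immediately from $\sum_j \varphi_j = k\one_E - \one_A \leq k$, with nonnegativity of each $\varphi_j$ inherited from the $\varphi_f$'s, placing every $\varphi_j$ in $[0,k]$.

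There is no real obstacle here beyond careful bookkeeping; the only place that required a genuine idea is the product bound, where the naive estimate $\|h_j\| \leq \prod_{f \in C_j}\|f\|$ would already be dangerous but the $\|f_0^{(j)}\|^{|C_j|-1}$ denominator conveniently absorbs the extra primitive-vector factors, keeping the total exponent at exactly $|F|-1$.
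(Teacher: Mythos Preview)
Your argument is correct and essentially identical to the paper's own proof: apply Theorem~\ref{structure}, group the $\varphi_f$ by commensurability class, and bound $\|h_j\|\le\prod_{f\in C_j}\|f\|\le\mathrm{diam}(F)^{|C_j|}$. One cosmetic slip: a single class $C_j$ may contain vectors pointing in opposite directions along $f_0^{(j)}$, so you cannot always arrange all $a_f>0$; simply take $h_j=\bigl(\operatorname{lcm}_{f\in C_j}|a_f|\bigr)f_0^{(j)}$ instead, and note that the theorem only requires $h_j$ to be an \emph{integer} (not positive integer) multiple of some $f\in F\setminus\{0\}$.
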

	
	\begin{proof}  From Theorem \ref{structure} one has
		$$ \one_A = k\one_E - \sum_{f \in F \backslash \{0\}} \tilde \varphi_f$$
		where each $\tilde\varphi_f \colon \Z^d \to [0,1]$ is $\langle qf\rangle$-periodic.  We define an equivalence relation on $F \backslash \{0\}$ by declaring $f \sim f'$ if $f,f'$ are commensurable (namely,  $f = p f'$ for some rational $p$).  If we let $C_1,\dots,C_m$ be the equivalence classes of this relation, then $1 \leq m \leq |F|-1$ and we have a decomposition \eqref{vph} with
		$$ \varphi_j \coloneqq \sum_{f \in C_j} \tilde \varphi_f.$$
		In particular the $\varphi_j$ are non-negative, and then from \eqref{vph} and the non-negativity of $\one_A$ we conclude that all the $\varphi_j$ are also bounded by $k$, as well as the bound \eqref{level-1}.  Since each $\tilde \varphi_f$ is $\langle qf \rangle$-periodic, we see on taking least common multiples that $\varphi_j$ is $\langle qh_j \rangle$-periodic for some non-zero $h_j \in \Z^2$ commensurable to the elements of $C_j$ and of magnitude at most
		$$ \|h_j\| \leq \prod_{f \in C_j} \|f\| \leq \mathrm{diam}(F)^{|C_j|}$$
		(note that $\|f\| \leq \mathrm{diam}(F)$ for all $f \in F$ since $0 \in F$).  In particular we have \eqref{h-prod}.  Since the $C_j$ are pairwise incommensurable, the $h_j$ are also pairwise incommensurable.  Finally, the bound \eqref{q-bound} is clear from definition of $q$.
	\end{proof}
	
	We have found that the bound \eqref{level-1} is particularly powerful in the level one case $k=1$, as it can be used in that case to completely rule out ``equidistributed'' scenarios in which at least one of the $\varphi_j$ has values that equidistribute in the unit interval $[0,1]$.  However, in higher level settings it is possible for multiple equidistributed $\varphi_j$ to coexist, which is what led us to the counterexample constructed in Section \ref{counter-sec}.
	
	As another quick application of Theorem \ref{structure}, we obtain an explicit formula for a universal period of one-dimensional tiles:
	
	\begin{corollary}[Universal period of one-dimensional tiles]\label{structure-1d}  Let $F$ be a finite subset of $\Z$, and let $A \subset \Z$ be a set such that $\one_F * \one_A$ is $\langle \ell \rangle$-periodic for some $\ell \geq 1$.  We normalize $0 \in F$.  Then $A$ is $\langle qn \rangle$-periodic, where $q$ is the least common multiple of $\ell$ and all the primes less than or equal to $2|F|$, and $n$ is the least common multiple of $\|f\|$ for all $f \in F \backslash \{0\}$.
	\end{corollary}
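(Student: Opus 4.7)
The plan is to apply Theorem \ref{structure} directly in dimension $d=1$ and then collapse the various periodicities to the common period $qn$. Since $\one_F * \one_A$ is $\ell\Z$-periodic and $0 \in F$, Theorem \ref{structure} with $d=1$ furnishes a decomposition
\begin{equation*}
\one_A = \one_F * \one_A - \sum_{f \in F \setminus \{0\}} \varphi_f,
\end{equation*}
where each $\varphi_f \colon \Z \to [0,1]$ is $\langle qf\rangle$-periodic, with $q$ the least common multiple of $\ell$ and of the primes $\le 2|F|$. My goal is to show the right-hand side is $\langle qn\rangle$-periodic.

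First I would handle the convolution term: since $\one_F * \one_A$ is $\langle \ell\rangle$-periodic and $\ell \mid q \mid qn$, it is automatically $\langle qn\rangle$-periodic. Next I would handle each $\varphi_f$. In one dimension the subgroup $\langle qf\rangle$ of $\Z$ equals $\langle q|f|\rangle$, since $f$ and $-f$ generate the same cyclic subgroup. By the definition of $n$ as the least common multiple of the $|f|$'s for $f \in F \setminus \{0\}$, we have $|f| \mid n$, hence $q|f| \mid qn$, and therefore $\langle qn\rangle \subset \langle q|f|\rangle$. Thus every function that is $\langle q|f|\rangle$-periodic is also $\langle qn\rangle$-periodic, and in particular each $\varphi_f$ is $\langle qn\rangle$-periodic.

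Combining these observations, every term on the right-hand side of the displayed identity is $\langle qn\rangle$-periodic, and hence so is $\one_A$. This is exactly the statement that $A$ is $\langle qn\rangle$-periodic.

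Since all the work has already been done by Theorem \ref{structure}, there is no real obstacle in the argument beyond verifying the divisibility bookkeeping between $\ell$, $q$, the $|f|$, and $n$; the corollary is essentially a one-line consequence of the structure theorem, specialized to $d=1$ and combined with the observation that the one-dimensional cyclic subgroups generated by $qf$ for $f \in F \setminus \{0\}$ all lie above the single subgroup $\langle qn\rangle$.
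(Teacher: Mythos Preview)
Your proposal is correct and follows exactly the same approach as the paper's proof, which simply notes that by Theorem \ref{structure} the function $\one_A$ is a finite linear combination of terms each of which is $\langle qn\rangle$-periodic. You have merely spelled out the divisibility bookkeeping (that $\ell \mid qn$ and that each $q|f|$ divides $qn$) that the paper leaves implicit.
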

	
	\begin{proof}  From Theorem \ref{structure} we see that $\one_A$ is the linear combination of a finite number of terms, each of which is $qn$-periodic.  The claim follows.
	\end{proof}

	As first observed in \cite{N}, an easy application of the pigeonhole principle already gives that all one-dimensional tilings are periodic; however, the bound produced is exponential in the diameter $\mathrm{diam}(F)$ if done naively.  In contrast, the bound here is polynomial in the diameter (for fixed $|F|$), and further is uniform over \emph{all} tilings $A$ of an $\langle \ell \rangle$-periodic set by a fixed tile $F$, whereas the period produced by pigeonhole principle arguments will depend on the choice of tiling.  In \cite{steinberger} it was shown that if the cardinality $|F|$ of the tile is not held fixed, the period of a one-dimensional tiling can grow superpolynomially in the diameter $n \coloneqq \mathrm{diam}(F)$ (in fact a lower bound $\exp( \log^2 n / 4 \log\log n )$ is demonstrated for infinitely many $n$); there is also an exponential lower bound for indecomposable tilings of higher level \cite{stein2}.   Conversely, the best known upper bound for the period for a tile of diameter $n$ (with no restriction on $|F|$) is $\exp(O_\eps(n^{1/3+\eps}))$ \cite{biro}.
	
	\begin{remark} In the special case of Corollary \ref{structure-1d} when $\one_F * \one_A = 1$, the dilation lemma of Tijdeman \cite[Theorem 1]{tijdeman} (see also \cite{cm} for an alternate proof) allows one to replace ``all the primes less than or equal to $2|F|$'' with ``all the primes dividing $|F|$''.
	\end{remark}

	\section{Weak periodicity of two-dimensional tilings of level one}\label{weak-sec}
	
	 We now prove Theorem \ref{main-quant}. Our starting point is the decomposition in Theorem \ref{structure-2}.  Accordingly, let $m, \varphi_1,\dots,\varphi_m, h_1,\dots,h_m, q$ be as in that theorem.  If $m=1$ then \eqref{vph} ensures that $A$ is $\langle qh_1 \rangle$-periodic, and we are already done.  Henceforth we assume $m \geq 2$, hence $|F| \geq 3$.
	
	Since there are at most $|F|-1$ vectors $h_1,\dots,h_m$, and they are all non-zero, one can find a vector $\tilde e \in \Z^2$ of size $O_{|F|}(1)$ which is incommensurable to all of the $h_1,\dots,h_m$.  Next, let $N$ be the least common multiple of all the determinant magnitudes $|h_i \wedge h_j|$ for $1 \leq i < j \leq m$, thus by \eqref{cramer-include} one has
	\begin{equation}\label{nlat}
	N \Z^2 \subset \langle h_i, h_j \rangle
	\end{equation}
	for all $1 \leq i < j \leq m$.  We also have the bounds
	\begin{equation}\label{N-bound}
	\begin{split}
	N &\leq \prod_{1 \leq i < j \leq m} \|h_i\| \|h_j\| \\
	&\leq (\prod_{j=1}^m \|h_j\|)^{m-1} \\
	&\leq  \mathrm{diam}(F)^{(|F|-1)(|F|-2)}.
	\end{split}
	\end{equation}
	For any $x \in \Z^2$ and $j=1,\dots,m$, we introduce the one-dimensional functions $P_{x,j} \colon \Z \to [0,1]$ by the formula
	\begin{equation}\label{Pdef}
	P_{x,j}( n ) \coloneqq \varphi_j( x + ne),
	\end{equation}
	where $e=qN\tilde e$. These functions enjoy several useful properties:
	
	\begin{prop}[Properties of $P_{x,j}$]\label{pxj-prop} Let $x \in \Z^2$. 
		\begin{enumerate-math}
			\item  One has $\sum_{j=1}^m P_{x,j}(n) = 0 \hbox{ mod } 1$ for all $n \in \Z$.
			\item  For each $1 \leq j \leq m$, the map $n \mapsto P_{x,j}(n) \hbox{ mod } 1$ is a polynomial of degree at most $m-2$.
			\item   For any $1 \leq i < j \leq m$, one has $\sup_{n \in \Z} P_{x,i}(n) + \sup_{n \in \Z} P_{x,j}(n) \leq 1$.
		\end{enumerate-math}
	\end{prop}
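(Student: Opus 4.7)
Part (i) is immediate from the level-one ($k=1$) decomposition $\one_A = \one_E - \sum_{j=1}^m \varphi_j$ of Theorem~\ref{structure-2}: rearranging, $\sum_j \varphi_j = \one_E - \one_A$ is integer-valued, so evaluating at $y = x + ne$ gives $\sum_{j=1}^m P_{x,j}(n) \equiv 0 \pmod 1$ for every $n$.

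For part (ii), the plan is to establish $\Delta_e^{m-1}\varphi_j \equiv 0 \pmod 1$ pointwise; restricting to $y = x + ne$ then yields that the $(m-1)$-st finite difference of $P_{x,j} \bmod 1$ vanishes, and standard discrete calculus on $\R/\Z$ forces $P_{x,j} \bmod 1$ to be a polynomial in $n$ of degree at most $m - 2$. The first ingredient is to apply the commuting product $\prod_{i \neq j}\Delta_{qh_i}$ to the identity from (i); every summand except the one with $k = j$ is annihilated, because $\Delta_{qh_k}\varphi_k = 0$ by $\langle qh_k\rangle$-periodicity, yielding $\prod_{i \neq j}\Delta_{qh_i}\varphi_j \equiv 0 \pmod 1$. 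The second ingredient collapses $\Delta_e$ itself: since $N\tilde e \in \langle h_j, h_i\rangle$ by \eqref{nlat}, I may write $e = q a_{ji} h_j + q b_{ji} h_i$ for integers $a_{ji}, b_{ji}$ for any $i \neq j$, and the $\langle qh_j\rangle$-periodicity of $\varphi_j$ turns $\Delta_e\varphi_j$ into $\Delta_{q b_{ji} h_i}\varphi_j$. Iterating this $m-1$ times, using a different auxiliary index $i$ at each step (permissible because the operators commute and $\Delta_e^k\varphi_j$ remains $\langle qh_j\rangle$-periodic, so can be re-expressed via any $i \neq j$), I obtain $\Delta_e^{m-1}\varphi_j = \prod_{i \neq j}\Delta_{q b_{ji} h_i}\varphi_j$. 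A telescoping expansion then writes each factor $\Delta_{q b_{ji} h_i}$ as an integer linear combination of translates of $\Delta_{qh_i}$, exhibiting $\Delta_e^{m-1}\varphi_j$ as an integer linear combination of translates of $\prod_{i \neq j}\Delta_{qh_i}\varphi_j$; each translate is $\equiv 0 \pmod 1$ by the first ingredient, so the full expression is too.

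For part (iii), I prove the stronger pointwise inequality $P_{x,i}(n') + P_{x,j}(n) \leq 1$ for all $n, n' \in \Z$, from which the supremum bound follows by separately supremising in $n'$ and $n$. Using the decomposition $e = q a_{ji} h_j + q b_{ji} h_i$ from part (ii), set $w := x + n' e - (n' - n)\, q b_{ji} h_i$. Then $w - (x + n' e) \in \langle qh_i\rangle$, so $\varphi_i(w) = P_{x,i}(n')$, while $w - (x + n e) = (n' - n)\, q a_{ji} h_j \in \langle qh_j\rangle$, so $\varphi_j(w) = P_{x,j}(n)$. Finally \eqref{level-1} gives $\varphi_i(w) + \varphi_j(w) \leq \sum_k \varphi_k(w) \leq 1$, completing the proof.

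The main technical obstacle is part (ii): converting the mod-$1$ vanishing of $\prod_{i \neq j}\Delta_{qh_i}\varphi_j$ (itself a consequence of the $m$-term cancellation in (i) together with the periodicities of the $\varphi_k$) into a mod-$1$ vanishing of the iterated derivative $\Delta_e^{m-1}\varphi_j$ requires expressing $\Delta_e\varphi_j$ in $m - 1$ different ways, one for each auxiliary direction $h_i$ with $i \neq j$, and reassembling the resulting product via the telescoping identity. Parts (i) and (iii) are comparatively short exploitations of the structural decomposition and of the containment $e \in \langle q h_i, q h_j\rangle$.
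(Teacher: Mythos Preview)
Your proof is correct and follows essentially the same strategy as the paper for all three parts. Parts (i) and (iii) match the paper's argument almost verbatim.

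For part (ii) your route is a minor variant. The paper applies, for each $i\neq j$, the single operator $\Delta_{e-b_{i,j}qh_j}=\Delta_{a_{i,j}qh_i}$ directly to the identity $\sum_k\varphi_k\equiv 0\pmod 1$; this operator simultaneously annihilates $\varphi_i$ (being a shift by a multiple of $qh_i$) and acts as $\Delta_e$ on $\varphi_j$ (since the two shifts differ by a multiple of $qh_j$). Applying all $m-1$ such operators in turn yields $\Delta_e^{m-1}\varphi_j\equiv 0\pmod 1$ in one stroke, with no need for your separate ``first ingredient'' $\prod_{i\neq j}\Delta_{qh_i}\varphi_j\equiv 0$ or the telescoping step that connects $\Delta_{qb_{ji}h_i}$ back to $\Delta_{qh_i}$. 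Your two-ingredient decomposition is valid, but the paper's choice of operators is more economical.
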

	
	We remark that the polynomiality property (ii) was previously observed in \cite[Lemma 4.3]{BH}.  The linear forms $\alpha m_1 \hbox{ mod } 1, \alpha m_2 \hbox{ mod } 1, \alpha (m_1+m_2) \hbox{ mod } 1$ implicitly appearing in Section \ref{counter-sec} are essentially examples of the polynomials appearing in Proposition \ref{pxj-prop}, in the context of higher level tilings.  However, we will eliminate this sort of ``equidistributed'' behavior in the level one case in Proposition \ref{coset} below.
	
	\begin{proof}[Proof of \propref{pxj-prop}]  If starts with \eqref{vph} and works modulo $1$ to eliminate the $\one_A$ and $\one_E$ terms, we have
		\begin{equation}\label{jsum}
		\sum_{j=1}^m \varphi_j = 0 \hbox{ mod } 1.
		\end{equation}
		Evaluating this at $x+ne$ we obtain (i).
		
		Now we prove (ii).  Let $1 \leq j \leq m$. From \eqref{nlat} we see that for any $1 \leq i \leq m$ distinct from $j$, that 
		$$e = qN\tilde e \in q\langle h_i, h_j \rangle,$$
		that is to say we have
		\begin{equation}\label{e}
		e = a_{i,j} qh_i + b_{i,j} qh_j
		\end{equation} 
		for some integers $a_{i,j}, b_{i,j}$.  In particular, from the $\langle qh_i\rangle$-periodicity of $\varphi_i$ we have
		$$ \Delta_{e-b_{i,j} qh_j} \varphi_i = 0$$
		while from the $\langle qh_j \rangle$-periodicity of $\varphi_j$ we have
		$$ \Delta_{e-b_{i,j} qh_j} \varphi_j = \Delta_e \varphi_j.$$
		If we then apply the discrete derivative operators $\Delta_{e-b_{i,j} qh_j}$ for each $1 \leq i \leq m$ distinct from $j$ in turn to \eqref{jsum} to eliminate all the $\varphi_i$ other than $\varphi_j$, we conclude that
		$$ \Delta_e^{m-1} * \varphi_j = 0 \hbox{ mod } 1.$$
		(Note that the discrete derivative operators only involve convolution with integer-valued functions and are well defined on functions that are only defined modulo $1$.)  Evaluating this on the line $\{ x + ne: n \in \Z\}$ using \eqref{Pdef}, we conclude the one-dimensional identity
		$$ \Delta_1^{m-1} P_{x,j} = 0 \hbox{ mod } 1.$$
		That is to say, the $(m-1)^{\mathrm{th}}$ discrete derivative of $P_{x,j} \hbox{ mod } 1$ vanishes.  A simple induction on $m$ then shows that $P_{x,j} \hbox{ mod } 1$ is a polynomial of degree at most $m-2$, giving (ii) as claimed.
		
		Now we prove (iii).  Let $1 \leq i < j \leq m$.
		As in \eqref{e}, we write $e$ as a linear combination \eqref{e} of $qh_i, qh_j$.  In particular, for any $n_1,n_2 \in \Z$ one has the identity
		$$ (x+n_1 e) + a_{i,j}(n_2-n_1)qh_i = (x+n_2 e) - b_{i,j} (n_2-n_1) qh_j.$$
		Evaluating \eqref{level-1} at this point, we conclude in particular that
		$$ \varphi_i( (x+n_1 e) + a_{i,j}(n_2-n_1)qh_i ) +  \varphi_j( (x+n_1 e) - b_{i,j}(n_2-n_1)qh_j ) \leq 1.$$
		Using \eqref{Pdef}, the $\langle q h_i \rangle$-periodicity of $\varphi_i$, and the $\langle qh_j \rangle$-periodicity of $\varphi_j$, we conclude that
		$$ P_{x,i}(n_1) + P_{x,j}(n_2) \leq 1.$$
		Taking suprema in $n_1,n_2$, we obtain (iii).
	\end{proof}

	To exploit these properties, we use the following exponential sum estimate\footnote{We thank Igor Shparlinski for this reference.} from analytic number theory.
	
	\begin{lem}[Exponential sum estimate]\label{lud}  Let $d \geq 1$, and let $P \colon \Z \to \R/\Z$ be a nonconstant polynomial of degree at most $d$ whose nonconstant coefficients are rational with denominators having least common multiple $Q$; in particular $P$ is periodic with period $Q$.  Then one has
		$$ \left| \sum_{n=1}^Q e^{2\pi i P(n)}\right| \ll_d  Q^{1-\frac{1}{d}}.$$
	\end{lem}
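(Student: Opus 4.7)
The bound is Hua's inequality for complete exponential sums with polynomial phase, and the plan is to reduce it---via normalization and Chinese Remainder Theorem multiplicativity---to the case of prime-power moduli, and then handle those prime powers either by iterated Weyl differencing or by a direct citation of the published Hua bound.

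First I would \emph{normalize} the polynomial. Writing $P(n) = \alpha_0 + \sum_{i=1}^d \alpha_i n^i \bmod 1$, the assumption that the nonconstant $\alpha_i$ are rational with denominators of least common multiple $Q$ is equivalent to the existence of integers $b_1,\dots,b_d$ with $\alpha_i = b_i/Q$ and $\gcd(b_1,\dots,b_d,Q) = 1$. The constant term $\alpha_0$ contributes only a unimodular factor, so the task reduces to bounding
\[
|S(Q,f)| \ll_d Q^{1-1/d}, \qquad S(Q,f) := \sum_{n=1}^{Q} e^{2\pi i f(n)/Q},
\]
where $f(x) := \sum_{i=1}^d b_i x^i$. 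Next I would use the \emph{CRT multiplicativity} of $S$: if $Q = Q_1 Q_2$ with $\gcd(Q_1,Q_2) = 1$, the substitution $n \equiv Q_2 \overline{Q_2}\, n_1 + Q_1 \overline{Q_1}\, n_2 \pmod Q$ factors $S(Q,f) = S(Q_1, f_1)\,S(Q_2, f_2)$, with the coprimality hypothesis passing to each factor; this reduces the task to prime powers $Q = p^k$.

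For the \emph{prime-power} step, when $k=1$ the Weil bound $|S(p,f)| \leq (d-1)p^{1/2}$ is already much stronger than required. For $k \geq 2$ the standard route is iterated Weyl differencing combined with induction on the degree $d$: choosing $r := \lceil k/d \rceil$, writing $n = u + p^r v$, Taylor-expanding $f(u+p^r v) \bmod p^k$, and reducing the $v$-sum to a geometric series, one bounds $|S(p^k,f)|$ by $p^{k-r}$ times the number of solutions $u$ to a congruence on $f'(u)$ modulo an appropriate power of $p$. The \textbf{main obstacle} is that the hypothesis $\gcd(b_1,\dots,b_d,Q) = 1$ permits the leading coefficient $b_d$ to be divisible by $p$, so the induction must track carefully how coprimality is inherited by $f'$ at possibly lower effective degree. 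The cleanest way to discharge the lemma in the paper is to cite the published form of Hua's bound (for instance Vaughan, \emph{The Hardy--Littlewood Method}, Theorem 7.1, or Iwaniec and Kowalski's \emph{Analytic Number Theory}, Chapter 12) rather than reconstruct the full induction; the acknowledgment to Shparlinski in the excerpt suggests exactly such an appeal to the analytic number theory literature.
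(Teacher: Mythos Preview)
Your proposal is correct, and its ultimate disposition---cite the published analytic number theory literature rather than reprove the bound---is exactly what the paper does: its entire proof is the single line ``See \cite{stechkin}'' (Stechkin's 1977 estimate for complete rational trigonometric sums). Your sketch of the CRT-plus-prime-power route via Hua's inequality is a valid alternative derivation, and your suggested references (Vaughan, Iwaniec--Kowalski) would serve equally well; the only difference is the choice of citation.
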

	
	\begin{proof}  See \cite{stechkin}.
	\end{proof}
	
	We also remark that bounds of this type (but with the gain $1/d$ replaced by an exponent that decays exponentially in $d$) can be established by the Weyl differencing (or van der Corput) method; see for instance \cite[Lemma 1.1.16]{tao}.
	
	\begin{lem}[Polynomials only fail to equidistribute when periodic]\label{period}  Let $P \colon \Z \to \R/\Z$ be a polynomial of some degree $d$ that avoids an interval $I$ of positive length in $\R/\Z$.  Then $P$ is periodic with period at most $O_{d,I}(1)$.
	\end{lem}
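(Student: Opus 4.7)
The strategy has two stages: first show that all nonconstant coefficients of $P$ must be rational (so that $P$ is genuinely periodic), and then bound the resulting period using \lemref{lud}.

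For the first stage, write $P(n) = \alpha_0 + \alpha_1 n + \dots + \alpha_d n^d$. If some $\alpha_j$ with $1 \leq j \leq d$ were irrational, Weyl's equidistribution theorem applied to the polynomial sequence $(P(n))_{n \in \Z}$ would give that $P(n) \hbox{ mod } 1$ is equidistributed in $\R/\Z$, hence dense; this contradicts the hypothesis that $P$ avoids $I$. Therefore every nonconstant $\alpha_j$ is rational; writing $\alpha_j = p_j/q_j$ in lowest terms, $P$ is periodic with period $Q := \mathrm{lcm}(q_1, \dots, q_d)$, and the task reduces to showing $Q \ll_{d,|I|} 1$.

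For the second stage, suppose for contradiction that $Q$ is very large in terms of $d$ and $|I|$, and let $\mu := \frac{1}{Q} \sum_{n=0}^{Q-1} \delta_{P(n)}$ be the normalized counting measure of $P$ on one period; by hypothesis $\mu(I) = 0$. Its Fourier coefficients are $\hat\mu(k) = \frac{1}{Q} \sum_{n=0}^{Q-1} e^{-2\pi i k P(n)}$. For $1 \leq k < Q$ a short $p$-adic valuation calculation shows that $kP \hbox{ mod } 1$ is a nonconstant polynomial with period exactly $Q_k = Q/\gcd(k, Q) > 1$, and applying \lemref{lud} over a single period of $kP$ yields
\[
|\hat\mu(k)| \ll_d Q_k^{-1/d} = (\gcd(k, Q)/Q)^{1/d}.
\]
An Erd\H{o}s-Tur\'an-type argument (for instance, integrating $\mu$ against a nonnegative Fej\'er-kernel approximation $\psi$ to $\mathbf{1}_I$ of degree $K \approx Q/2$, with $\int \psi \asymp |I|$ and $\psi = O_{|I|}(1/K)$ outside $I$) then gives $|I| \ll 1/K + \sum_{0 < |k| \leq K} |\hat\mu(k)|/|k|$. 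Organizing this last sum by $d_0 = \gcd(k, Q)$ and using the divisor bound $\tau(Q) \ll_\eps Q^\eps$ produces $O_{d,\eps}(Q^{-1/d+\eps} \log Q)$, which is less than $|I|$ for $Q$ sufficiently large in terms of $d$ and $|I|$, the desired contradiction.

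The main obstacle lies in the exponential sum bound: the estimate $|\hat\mu(k)| \ll_d (\gcd(k, Q)/Q)^{1/d}$ from \lemref{lud} degrades to nothing precisely when $\gcd(k, Q)$ is close to $Q$, so no useful uniform decay in $k$ is available. The gain must come from the sparsity of such bad $k$ (at most $K/d_0$ values of $k \leq K$ have $\gcd(k, Q) = d_0$); this is why the summation has to be organized by divisors of $Q$ and combined with \lemref{lud} rather than estimated term by term.
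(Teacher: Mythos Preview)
Your argument is correct. The first stage (rationality of nonconstant coefficients via Weyl) is identical to the paper's. In the second stage you take a genuinely different route.

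The paper avoids your Erd\H{o}s--Tur\'an/divisor-sum machinery by a single-frequency trick. By Weierstrass approximation one can find a fixed trigonometric polynomial $f$ (depending only on $I$) with mean zero and $f \geq 1$ on $(\R/\Z)\setminus I$. Since $P$ avoids $I$, one has $\frac{1}{Q}\sum_{n=1}^Q f(P(n)) \geq 1$, and by pigeonhole over the $O_I(1)$ Fourier modes of $f$ there is some nonzero $k = O_I(1)$ with $|\hat\mu(k)| \gg_I 1$. Because $|k|$ is bounded in terms of $I$ alone, $\gcd(k,Q) \leq |k| = O_I(1)$, so the period $Q_k$ of $kP$ is $\gg_I Q$, and a single application of \lemref{lud} gives $|\hat\mu(k)| \ll_d Q^{-1/d}$, forcing $Q = O_{d,I}(1)$.

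So the comparison is: you sum the contributions of \emph{all} frequencies up to $K \approx Q/2$ and must then organize by $\gcd(k,Q)$ and invoke the divisor bound $\tau(Q) \ll_\eps Q^\eps$ to handle the bad frequencies with large gcd; the paper instead localizes at the outset to a \emph{single} frequency of size $O_I(1)$, where the gcd is automatically small and \lemref{lud} applies with full strength. Your approach is more quantitative in spirit (and would adapt to situations where one only knows $\mu(I)$ is small rather than zero), but the paper's is shorter and needs neither Erd\H{o}s--Tur\'an nor the divisor bound.
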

	
	\begin{proof}  Since $P$ is not equidistributed\footnote{We say that a function $P \colon \Z \to \R/\Z$ is equidistributed if we have $\lim_{N \to \infty} \frac{1}{2N+1} \sum_{n=-N}^N F(P(n)) = \int_{\R/\Z} F(t)\ dt$ for all continuous $F \colon \R/\Z \to \R$.}, we see from the Weyl equidistribution theorem that all the non-constant coefficients of $P$ must be rational.  We let $Q$ be the least common multiple of the denominators of these coefficients, then $P$ is periodic with period $Q$.  By the Weierstrass approximation theorem, one can find a trigonometric polynomial $f$ (depending only on $I$) that is periodic with period $1$, has mean zero, and is at least $1$ outside of $I$.  Then
		$$ \frac{1}{Q} \sum_{n=1}^Q f( P(n) ) \geq 1.$$
		By the pigeonhole principle, we can thus find a non-zero integer $k=O_I(1)$ such that
		$$ \left|\frac{1}{Q} \sum_{n=1}^Q e^{2\pi i kP(n)}\right| \gg_I 1.$$
		Note that the least common multiple of the denominators of $kP(n)$ is $\gg_{d,I} Q$.  Applying Lemma \ref{lud}, we conclude that $Q = O_{d,I}(1)$, giving the claim.
	\end{proof}
	
	Let $M$ be the least common multiple of $|\tilde e \wedge h_j|$ for $1 \leq j \leq m$.
	Let $\Lambda \coloneqq \langle e, qNMh_1\rangle$ be the lattice generated by $e, qNMh_1$.  Note in particular that
	$$ \Lambda \subset q \Z^2 \subset \ell \Z^2.$$
	
	\begin{prop}[One-dimensional periodicity in each coset]\label{coset}  On each coset $x + \Lambda$, the set $A \cap (x + \Lambda)$ is $\langle Q_m M^2 N q h_j \rangle$-periodic for some $1 \leq j \leq m$, where $Q_m$ depends only on $m$  and $Q_m M^2 N qh_j \in \Lambda$.
	\end{prop}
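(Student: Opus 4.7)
The plan is to fix a coset $x_0 + \Lambda$ and select an index $j = j(x_0) \in \{1, \ldots, m\}$ for which $\one_A$ is invariant on the coset under translation by $t \coloneqq Q_m M^2 N q h_j$. The constant $Q_m$ will depend only on $m$; a convenient choice is to take it as the least common multiple of all positive integers bounded by the period supplied by \lemref{period} for polynomials of degree at most $m-2$ avoiding the interval $(1/2,1) \subset \R/\Z$.

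First I would carry out a per-line analysis. On each line $\ell = y + \Z e$ contained in $x_0 + \Lambda$, \propref{pxj-prop}(iii) implies at most one index $j_0(\ell)$ has $\sup_n P_{y,j_0(\ell)}(n) > 1/2$; for every other $j$, $P_{y,j}$ takes values in $[0,1/2]$, so $P_{y,j} \bmod 1$ is a polynomial of degree $\leq m-2$ (by (ii)) avoiding $(1/2,1) \subset \R/\Z$, hence $Q_m$-periodic $\bmod 1$ by \lemref{period}. The integer-valued jump $P_{y,j}(n+Q_m) - P_{y,j}(n)$ then lies in $[-1/2,1/2]\cap\Z = \{0\}$, so $P_{y,j}$ is $Q_m$-periodic as a real-valued function. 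Equivalently, $\varphi_j(y + Q_m e) = \varphi_j(y)$ for every $y$ on a line not contained in $B_j \coloneqq \{x : \sup_n \varphi_j(x + ne) > 1/2\}$.

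Next I would choose $j(x_0)$ by the rule: if some line of $x_0 + \Lambda$ lies in some $B_j$, set $j(x_0) = j$; otherwise pick $j(x_0)$ arbitrarily. Showing this rule is unambiguous---that at most one index $j$ arises this way across lines of the coset---will be the main obstacle. For $j = 1$ it is immediate: $\varphi_1$ is $\langle qh_1\rangle$-periodic and the lines of the coset differ by integer multiples of $qNMh_1 = NM \cdot qh_1$, so ``being in $B_1$'' is constant across lines of the coset, whence \propref{pxj-prop}(iii) prevents any $B_{j'}$ with $j' \neq 1$ from meeting the coset. For indices $j \geq 2$ one argues analogously using $qNM^2 h_j \in \Lambda$ (a Cramer-type consequence of $M$ being a multiple of every $|\tilde e \wedge h_k|$), which makes the pattern $\{b : \ell_b \in B_j\}$ periodic in the coset-line index $b$; combining several such periodic patterns with property (iii) and the incommensurability of the $h_j$ is expected to rule out coexistence of distinct $j$'s in this role.

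Finally, granted the choice of $j(x_0)$, periodicity of $\one_A = \one_E - \sum_{j'} \varphi_{j'}$ under $t$ on the coset will follow from checking each summand. The term $\one_E$ is invariant because $t \in q\Z^2 \subset \ell\Z^2$, and $\varphi_j$ is invariant because $t$ is an integer multiple of $qh_j$. For each $j' \neq j$, Cramer's identity \eqref{cramer} applied to $\tilde e, h_{j'}$, together with $|\tilde e \wedge h_{j'}| \mid M$, gives $Mh_j = c'\tilde e + d'h_{j'}$ with $c', d' \in \Z$; hence $t = (Q_m Mc')\,e + (Q_m Md'N)\,qh_{j'}$. Applying the global $\langle qh_{j'}\rangle$-periodicity reduces the shift by $t$ to a shift by $(Q_m Mc') e$, and the coset-level $\langle Q_m e\rangle$-periodicity of $\varphi_{j'}$ established above (valid since no line of the coset lies in $B_{j'}$, by the choice of $j$) makes that shift trivial, so $\varphi_{j'}(y+t) = \varphi_{j'}(y)$. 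The same Cramer-type decomposition applied to $\tilde e, h_1$ also gives $t = (Q_m Mc)\,e + (Q_m d)\,(qNMh_1) \in \Lambda$, completing the proof.
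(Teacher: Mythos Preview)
Your proposal has a genuine gap at precisely the step you flag as the ``main obstacle'': showing that at most one index $j$ can satisfy $\sup_n P_{y,j}(n) > 1/2$ as $y$ ranges over \emph{all} lines of the fixed coset $x_0 + \Lambda$. Your argument for $j=1$ is correct (the lines of the coset differ by multiples of $qh_1$, so the $\langle qh_1\rangle$-periodic function $\varphi_1$ is literally the same on every line), but for $j \geq 2$ the proposed route---periodicity of the pattern $\{b : \ell_b \subset B_j\}$ together with incommensurability of the $h_j$---does not close the gap. Periodicity of several one-dimensional $\{0,1\}$-valued patterns, plus the constraint that at most one equals $1$ at each point, does not force all but one pattern to vanish (e.g.\ $\one_{2\Z}$ and $\one_{2\Z+1}$ coexist happily). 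Incommensurability of the $h_j$ in $\Z^2$ does not translate into any usable arithmetic relation between the one-dimensional periods.

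The missing observation is that for each $j$ the restrictions of $\varphi_j$ to the various lines of the coset are all \emph{shifts of one another}. Writing $qNMh_1 = a_j\, qh_j + b_j\, e$ via \eqref{cramer-include}, the $\langle qh_j\rangle$-periodicity of $\varphi_j$ gives
\[
\varphi_j(x + se + t\,qNMh_1) = \varphi_j\bigl(x + (s+b_j t)e\bigr) = P_{x,j}(s+b_j t),
\]
so $\sup_n P_{y,j}(n)$ is \emph{independent} of which line $y + \Z e$ of the coset one is on, and your index $j_0(\ell)$ is automatically constant across the coset. With this in hand your per-line argument and final verification go through. The paper in fact uses this identity to bypass the per-line discussion entirely: it parametrises $\one_A$ on the whole coset by the single family $\{P_{x,j}\}_j$ and then splits into the case where some $\sup_n P_{x,j}(n) = 1$ (forcing $P_{x,j} \in \{0,1\}$ and all other $P_{x,i} \equiv 0$, yielding $\langle a_j qh_j\rangle$-periodicity directly) versus the case where all suprema are $<1$ (forcing all $P_{x,j}$ to be $Q_m$-periodic, yielding $\langle Q_m\, qNMh_1\rangle$-periodicity).
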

	
	\begin{proof}  
		From \eqref{cramer-include} one has
		$$ M \Z^2 \subset \langle \tilde e, h_i \rangle$$
		for all $1 \leq i \leq m$.  In particular, for each $1 \leq i \leq m$ one has
		\begin{equation}\label{qnm-decomp}
		qNMh_1 = a_i qh_i + b_i e
		\end{equation}
		where $e=qN\tilde e$ and $a_i,b_i$ are the integers
		\begin{equation}\label{aj-def}
		a_i \coloneqq NM \frac{h_1 \wedge \tilde e}{h_i \wedge \tilde e} 
		\end{equation}
		and
		$$ b_i \coloneqq M \frac{h_1 \wedge h_i}{\tilde e \wedge h_i}.$$ 
		In particular we have
		$$ \Lambda = \langle e, a_i qh_i \rangle$$
		for all $1 \leq i \leq m$.
		
		Fix $x \in \Z^2$.  From \eqref{qnm-decomp} we have for any integers $s,t$ that
		\begin{align*}
		\varphi_i( x + s e + t qNMh_1 ) &= \varphi_i( x + (s+b_i t) e + t a_i qh_i) \\
		&=  \varphi_i( x +(s+b_i t) e)\\
		&= P_{x,i}( s+ b_i t ) 
		\end{align*}
		thanks to the $\langle qh_i\rangle$-periodicity of $\varphi_i$, and \eqref{Pdef}.  Also, since $q$ is a multiple of $\ell$, $e$ is a multiple of $q$, and $E$ is $\ell \Z^2$-periodic, one has
		$$ \one_E( x + se + t qNMh_1 ) = \one_E(x).$$
		Thus we have the decomposition
		\begin{equation}\label{decomp}
		\one_A(x + s e + t qNMh_1 ) = \one_E(x) - \sum_{j=1}^m P_{x,j}( s + b_i t).
		\end{equation}
		
		Suppose first that $\sup_{n \in \Z} P_{x,j}(n) = 1$ for some $1 \leq j \leq m$.  Then from Proposition \ref{pxj-prop}(iii) we have $P_{x,i}(n)=0$ for all other $i$, hence by Proposition \ref{pxj-prop}(i), $P_{x,j}$ takes values in $\{0,1\}$.  The decomposition \eqref{decomp} then simplifies to
		$$ \one_A(x + s e + t qNMh_1 ) = \one_E(x) - P_{x,j}( s + b_j t)$$
		for all $s,t \in \Z$.  From \eqref{qnm-decomp} and the change of variables $s' \coloneqq s + b_j t$ this implies that
		$$ \one_A(x + s' e + t a_j qh_j ) = \one_E(x) - P_{x,j}( s' )$$
		for all $s',t \in \Z$.  In particular, on the coset $x + \Lambda$, the function $\one_A$ is $\langle a_j qh_j\rangle$-periodic, and hence $\langle M^2 N q h_j \rangle$-periodic by \eqref{aj-def} and the construction of $M$. Note that this argument also shows that $M^2 N qh_j \in \Lambda$.
		
		Now suppose we are in the opposite case that
		$$ \sup_{n \in \Z} P_{x,j}(n) < 1$$
		for all $1 \leq j \leq m$. From Proposition \ref{pxj-prop}(iii) this implies that
		$$ \sup_{n \in \Z} P_{x,i}(n) \leq 1/2$$
		for all $1 \leq i \leq m$ with at most one exception; thus, with at most one exception, $P_{x,i} \hbox{ mod } 1$ takes values in $[0,1/2] \hbox{ mod } 1$.  By Proposition \ref{pxj-prop}(ii) and Lemma \ref{period} this implies that with at most one exception, the $P_{x,i} \hbox{ mod } 1$ are periodic with period $O_m(1)$. Using Proposition \ref{pxj-prop}(i) and \lemref{period}, an exception cannot occur. Taking a common denominator, we conclude that there is a positive integer $Q_m$ depending only on $m$, such that the $P_{x,i} \hbox{ mod } 1$ for all $1 \leq i \leq m$  are $\langle Q_m\rangle$-periodic; since all the $P_{x,i}$ have supremum strictly less than $1$, we see that the $P_{x,i}$ are also $\langle Q_m \rangle$-periodic (note we no longer work modulo $1$).  From \eqref{decomp} we conclude that on the coset $x + \Lambda$, $A$ is $\langle Q_m M N q h_1 \rangle$-periodic.  Thus in either case we obtain the proposition.
	\end{proof}
	
	We finally conclude \thmref{main-quant}.
	\begin{proof}[Proof of \thmref{main-quant}]
	 From \eqref{h-prod} we can bound
	\begin{align*}
	M &\leq \prod_{j=1}^m \|h_j\| \|\tilde e\| \\
	&\ll_{|F|} \mathrm{diam}(F)^{|F|-1}
	\end{align*}
	and then from this and \eqref{h-prod}, \eqref{q-bound}, \eqref{N-bound} the lattice $\Lambda \coloneqq \langle qN\tilde e, qNMh_1 \rangle$ has index in $\ell\Z^2$  at most
	\begin{align*}
	qN\|\tilde e\| qNM\|h_1\| / \ell^2 &\ll_{|F|} N^2 M \mathrm{diam}(F)^{|F|-1} \\
	&\ll_{|F|} \mathrm{diam}(F)^{2(|F|-1)^2}.
	\end{align*}
	A similar computation shows that the quantity $L \coloneqq Q_m M^2 N q / \ell$ is an integer of magnitude at most
	\begin{align*}
	L &\ll_{|F|} M^2 N   \\
	&\ll_{|F|} \mathrm{diam}(F)^{(|F|-1)|F|}.
	\end{align*}
	This establishes Theorem \ref{main-quant} (note from an inspection of the arguments that none of the quantities $h_1,\dots,h_m, L, \Lambda$ constructed depends directly on $A$).
\end{proof}

	\section{Converting weakly periodic tilings to periodic tilings}\label{periodic-sec}
	
	A simple pigeonholing  argument\footnote{This argument is analogous to Newman's proof of the one-dimensional periodic tiling conjecture (\cite{N}).} shows that the existence of a weakly periodic tiling implies the existence of a periodic tiling (see, e.g., \cite[Theorem 2.3]{BH}, \cite[Proposition 2.1]{RX}). In this section we  introduce a constructive way to convert any weakly periodic tiling to a periodic tiling. This constructive approach allows us to achieve the significantly better bound  in \thmref{period-quant}, on the periods of the obtained periodic tiling. This bound is polynomial in the diameter of the tile $F$ (for fixed $|F|$).

	 We first give a ``slicing lemma'' that allows one to ``slice'' the tile $F$ along cosets along which the tiling set $A$ already exhibits some periodicity, while retaining the periodicity of the set being tiled.
	
	\begin{lem}[Slicing lemma]\label{slice}  Let $\ell, k \geq 1$ be natural numbers, and let $h$ be a primitive element of $\Z^2$.  Suppose that $F$ is a finite subset of $\Z^2$, and $A$ is a $\langle \ell k h\rangle$-periodic subset of $\Z^2$ such that $\one_F * \one_A$ is $\ell \Z^2$-periodic.  Then for every coset $x+\langle h \rangle$ of $\langle h \rangle$, the function $\one_{F \cap (x+\langle h \rangle)} * \one_A$ is $qks\Z^2$-periodic, where $q$ is the least common multiple of $\ell$ and all the primes less than or equal to $2|F|$, and $s$ is the least common multiple of the $|h \wedge (f-f')|$ for all $f,f' \in F$ with $f-f'$ incommensurate with $h$.
	\end{lem}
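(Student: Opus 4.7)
The plan is to decompose $F$ into its slices along cosets of $\langle h \rangle$, apply \lemref{dilation}(iii) to derive a single identity relating the slice convolutions $G_j := \one_{F_j}*\one_A$ (where $F_j = F \cap (x_j + \langle h\rangle)$), and then extract individual $qks\Z^2$-periodicity of each $G_j$ by Cramer's rule applied to a Vandermonde system, followed by a short Fourier argument.

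Setup: Since $h$ is primitive, I would complete it to a $\Z$-basis $\{h,h'\}$ of $\Z^2$, so that $qks\Z^2 = \langle qksh, qksh'\rangle$ and it suffices to prove $qksh$- and $qksh'$-periodicity of each $G_j$. The $qksh$-direction is free from the $\ell kh$-periodicity of $A$ together with $\ell k \mid qks$ (which holds because $\ell \mid q$). By translating $F$ (which only translates each $G_j$ and so preserves periodicity) I would normalize the $h'$-height of the first slice to be $b_1 = 0$ and write $b_2,\dots,b_t \in \Z\setminus\{0\}$ for the heights of the remaining slices.

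Key identity: Applying \lemref{dilation}(iii) with $g = \one_A$ (so $\sup g - \inf g = 1$, matching the definition of $q$) gives $\one_{rF}*\one_A = \one_F*\one_A$ for every $r = 1+qkm$, $m \ge 0$. For any $f = a_fh + b_jh' \in F_j$, the $h$-component $qkma_fh$ of $rf-f$ is a multiple of $\ell kh$ (since $\ell k\mid qkm$) and is absorbed by the $\ell kh$-periodicity of $A$; grouping by slices then yields the key identity
\[
\sum_{j=1}^t G_j(y - qkmb_jh') = \sum_{j=1}^t G_j(y), \qquad y \in \Z^2,\ m \ge 0.
\]
Differencing in $m$ and setting $H_j := \Delta_{qkb_jh'}G_j$ (so $H_1 \equiv 0$ by $b_1 = 0$) will reduce this to $\sum_{j=2}^t H_j(y - qkmb_jh') = 0$ for all $y$ and all $m \ge 0$.

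Extraction: Specializing to $m = 0, 1, \dots, t-2$ gives a square linear system in $(H_2,\dots,H_t)$ whose coefficient ``matrix,'' over the commutative ring of translation operators on $\Z^2$, is Vandermonde with nodes $T_j =$ translation by $qkb_jh'$. Cramer's rule gives $(\det V)\,H_{j_0} = 0$ for each $j_0 \ge 2$; factoring $T_{j'} - T_j = -T_j\,\Delta_{qk(b_{j'}-b_j)h'}$ and peeling off the invertible translations will reduce this to
\[
\prod_{2 \leq j < j' \leq t} \Delta_{qk(b_{j'}-b_j)h'}\, H_{j_0} = 0.
\]
A short Fourier argument---the kernel of $\prod_i \Delta_{c_ih'}$ on bounded functions consists of functions $h'$-periodic with period $\operatorname{lcm}(c_i)$, as one sees by examining the zero set of $\prod_i(1 - e^{-2\pi i c_i\xi})$---then shows $H_{j_0}$ is $qksh'$-periodic (since $\operatorname{lcm}_{2 \leq j < j' \leq t}(b_{j'}-b_j)$ divides $s$). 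Consequently $\Delta_{qksh'}\Delta_{qkb_{j_0}h'}G_{j_0} = 0$, and the same Fourier fact yields $qk\operatorname{lcm}(s,b_{j_0})h'$-periodicity of $G_{j_0}$; the normalization $b_1=0$ ensures $b_{j_0}=b_{j_0}-b_1 \mid s$, so the period is exactly $qksh'$. Finally $G_1$ inherits $qksh'$-periodicity by subtracting $\sum_{j\ge 2}G_j$ from the $\ell\Z^2$-periodic $\one_F*\one_A$.

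The main obstacle will be the Cramer/Vandermonde extraction and careful tracking of exactly which LCM appears at the end. The normalization $b_1=0$ is indispensable: without it, the final step would only give $qk\operatorname{lcm}(s, b_{j_0})h'$-periodicity of each $G_{j_0}$, which is strictly weaker than $qksh'$ in general, since $b_{j_0}$ need not divide $s$ without that normalization.
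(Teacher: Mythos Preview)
Your proof is correct and takes a genuinely different route from the paper's. The paper normalizes $0 \in F$, isolates the single slice $F \cap \langle h\rangle$, and writes
\[
\one_{F\cap\langle h\rangle}*\one_A \;=\; \one_F*\one_A - \sum_{f \in F\setminus\langle h\rangle} \delta_{rf}*\one_A \qquad (r \equiv 1 \bmod qk),
\]
then \emph{averages} over $r$ and extracts a subsequential limit via Arzel\`a--Ascoli (exactly as in the proof of \thmref{structure}) to produce functions $\varphi_f$ that are simultaneously $\langle \ell kh\rangle$- and $\langle qkf\rangle$-periodic; the Cramer inclusion \eqref{cramer-include} then gives $qks\Z^2$-periodicity in one stroke. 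You instead keep all slices in play, read the dilation identities as a Vandermonde system over the commutative ring of shift operators, and invert via the adjugate to isolate each $H_{j_0}$; periodicity then comes from the elementary fact that a bounded solution of $\prod_i \Delta_{c_i} f = 0$ must be $\operatorname{lcm}_i|c_i|$-periodic (which you can prove without Fourier: $\Delta_L^n f = 0$ for $L = \operatorname{lcm}_i|c_i|$, so $f$ is polynomial and bounded, hence constant, on each coset of $L\Z$). Your route trades the compactness step for finite linear algebra and treats all slices simultaneously, while the paper's averaging is shorter and recycles the structure-theorem machinery verbatim. One small quibble: your phrase ``the kernel \dots\ consists of'' overstates the Fourier fact --- the kernel is only \emph{contained in} the $\operatorname{lcm}(c_i)$-periodic functions, not equal to them --- but you only use that containment, which is exactly what your argument establishes.
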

	
	\begin{proof} We may assume that $F$ intersects $x + \langle h \rangle$, as the claim is trivial otherwise; by relabeling we may then assume $x \in F$.  By translating $F$ and $x$ we then may assume that $x=0$ and $0 \in F$.  From Lemma \ref{dilation}(iii) we have
		$$ \one_F * \one_A = \one_{rF} * \one_A$$
		whenever $r = 1 \hbox{ mod } q$.  If we strengthen the condition on $r$ to $r = 1 \hbox{ mod } qk$, then for each $f \in F \cap \langle h \rangle$, we have $rf - f \in \langle qk h \rangle \subset \langle \ell k h \rangle$, hence by the $\langle \ell k h\rangle$-periodicity of $A$ we have
		$$ (\one_{r(F \cap \langle h\rangle)} - \one_{F \cap \langle h \rangle}) * \one_A = 0.$$
		Combining the two equations, we see that
		$$ \one_F * \one_A = \one_{F \cap \langle h \rangle} * \one_A + \sum_{f \in F \backslash \langle h \rangle} \delta_{rf} * \one_A$$
		and thus
		$$ \one_{F \cap \langle h \rangle} * \one_A = \one_F * \one_A - \sum_{f \in F \backslash \langle h \rangle} \delta_{rf} * \one_A$$
		whenever $r = 1 \hbox{ mod } qk$.  Note that all the terms on the right-hand side are $\langle \ell k h \rangle$-periodic, since $\one_A$ is.  Averaging over $r$ and using the Arzel\'a-Ascoli theorem to extract a limit as in the proof of Theorem \ref{structure}, we see that
		$$ \one_{F \cap \langle h \rangle} * \one_A = \one_F * \one_A - \sum_{f \in F \backslash \langle h \rangle} \varphi_f$$
		for some functions $\varphi_f$ which are both $\langle \ell k h \rangle$-periodic and $\langle qk f\rangle$-periodic. By \eqref{cramer-include} and the definition of $s$ and $q$, each $\varphi_f$ is $qks\Z^2$-periodic, as is $\one_F * \one_A$, and the claim follows.
	\end{proof}

    \begin{remark}
         The above argument is in fact applicable to any $d$-dimensional one-periodic tiling of a periodic set. More precisely, it shows that if $F\subset\Z^d$ is finite, $h\in\Z^d$ is primitive and $A\subset\Z^d$ is a $\langle kh\rangle$-periodic set such that $\one_F*\one_A$ is periodic, then   the ``slice'' $\one_{F \cap \langle h \rangle}*\one_A$ can be written as $$\one_{F \cap \langle h \rangle}*\one_A=\one_F*\one_A-\sum_{f\in F\setminus \langle h\rangle}\varphi_{f},$$ where for any $f\in F\setminus \langle h\rangle$, the function $\varphi_f:\Z^d\to[0,1]$ is $\langle  kh,qkf\rangle$-periodic, for some $q(F)\in \Z$.
    \end{remark}
	
	We now use the slicing lemma to improve the periodicity properties of a tiling. The slicing lemma allows us to reduce the two-dimensional tiling to a collection of one-dimensional tilings, by considering slices (or ``cut and project'' sets) of both the tile $F$ and the tiling set $A$.
	We remark that similar argument was used in \cite{JP} and \cite{GL1} in a different context. 
	
	\begin{corollary}[Improving a tiling, I]\label{improv}  Let $\ell, k \geq 1$ be natural numbers, and let $h$ be a primitive element of $\Z^2$.  Suppose that $F$ is a finite subset of $\Z^2$, and $A$ is a $\langle \ell k h\rangle$-periodic subset of $\Z^2$ such that $\one_F * \one_A$ is $\ell \Z^2$-periodic.  Then there exists a $qks\Z^2$-periodic set $A'$ such that $\one_F * \one_A = \one_F * \one_{A'}$, where $q,s$ are as in Lemma \ref{slice}.  In fact we have the stronger statement
	$$ \one_F * \one_{A \cap (\langle h \rangle + y)} =  \one_F * \one_{A' \cap (\langle h \rangle + y)}$$
	for any coset $\langle h \rangle+y$ of $\langle h \rangle$.
	\end{corollary}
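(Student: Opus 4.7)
The plan is to reduce the problem to one-dimensional analysis along cosets of $\langle h \rangle$. Since $h$ is primitive, I can fix $v \in \Z^2$ so that $\{h,v\}$ is a basis of $\Z^2$, and slice both the tile and the tiling set along the cosets $C_j \coloneqq \langle h\rangle + jv$: set $F_j \coloneqq F \cap C_j$ (only finitely many are nonempty) and $B_j \coloneqq A \cap C_j$, so that $A = \bigsqcup_j B_j$. Note that $qks$ is a multiple of $\ell k$ (as $\ell \mid q$), so each $B_j$, being $\langle \ell k h\rangle$-periodic as a subset of $C_j$, is in particular $\langle qksh \rangle$-periodic.

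The first step is to apply \lemref{slice} to every coset $C_{j'}$ meeting $F$, which yields that $\one_{F_{j'}} * \one_A$ is $qks \Z^2$-periodic. Because $F_{j'} \subset C_{j'}$ and $B_m \subset C_m$, the summand $\one_{F_{j'}} * \one_{B_m}$ is supported in $C_{j'+m}$. Comparing the restrictions of $\one_{F_{j'}} * \one_A$ to the two parallel slabs $C_{j^*}$ and $C_{j^*+qks}$, and using that translation by $qksv$ maps one onto the other, I would extract the key identity
\[
\one_{F_{j'}} * (\one_{B_{m+qks}} - \one_{B_m + qksv}) = 0
\]
for every $m \in \Z$ and every $j'$ with $F_{j'} \neq \emptyset$. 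A short induction on $n$ upgrades this to $\one_{F_{j'}} * (\one_{B_{m+nqks}} - \one_{B_m + nqksv}) = 0$ for all $n \in \Z$, and summing over $j'$ gives $\one_F * (\one_{B_{m+nqks}} - \one_{B_m + nqksv}) = 0$ for all $m, n \in \Z$.

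Given this identity, the construction of $A'$ is essentially forced. For each $j \in \Z$, write $j = j_0 + nqks$ with $0 \leq j_0 < qks$, and define $A' \cap C_j \coloneqq B_{j_0} + nqksv$. Since each $B_{j_0}$ is $\langle qksh\rangle$-periodic, $A'$ is invariant under translation by $qksh$; by construction $A' \cap C_{j+qks} = (A' \cap C_j) + qksv$, so $A'$ is invariant under $qksv$. Because $\{h,v\}$ is a basis of $\Z^2$, we conclude $A' + w = A'$ for every $w \in \langle qksh, qksv\rangle = qks\Z^2$. Finally, the displayed identity immediately gives $\one_F * \one_{A \cap C_y} = \one_F * \one_{A' \cap C_y}$ for every coset $C_y$, which is the stronger statement; summing then yields $\one_F * \one_A = \one_F * \one_{A'}$.

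The main obstacle will be the bookkeeping of index shifts in the $v$-direction required to read the ``shift-compatibility'' identity out of $qks\Z^2$-periodicity. Once \lemref{slice} has done its work, the rest of the argument is a routine induction combined with coset arithmetic, and no essentially new ingredient is needed.
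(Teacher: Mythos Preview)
Your proposal is correct and follows essentially the same route as the paper: both arguments apply \lemref{slice} to each slice $F_{j'}$, read off that slices of $A$ separated by $qks$ in the transverse direction are $F$-equivalent, and then replace each slice by a canonical equivalent one to make the tiling $qks\Z^2$-periodic. The only cosmetic difference is in the choice of canonical representative: you periodically extend the block of slices $B_0,\dots,B_{qks-1}$ by explicit translation (which requires your short induction on $n$), whereas the paper places an arbitrary total order on the finite set $\Sigma$ of $\langle \ell k\rangle$-periodic one-dimensional sets and lets $\tilde A_z$ be the minimal element of the equivalence class of $A_z$, which makes the periodicity $\tilde A_{z+qks}=\tilde A_z$ immediate without induction.
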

	
	\begin{proof}  By applying an invertible linear transformation in $\mathrm{SL}_2(\Z)$ we may assume without loss of generality that the primitive element $h$ is equal to $(1,0)$.  For every integer $y \in\Z$, we introduce the one-dimensional slices
		$$ F_y \coloneqq \{ x \in \Z: (x,y) \in F \}$$
		and
		$$ A_y \coloneqq \{ x \in \Z: (x,y) \in A \}.$$
		From Lemma \ref{slice} we know that for each $y$, the set $\one_{F_y \times \{y\}} * \one_A$ is $qks\Z^2$-periodic, which in particular implies on taking slices that
		$$ \one_{F_y} * \one_{A_z} =  \one_{F_y} * \one_{A_{z+qks}}$$
		for all $y,z \in \Z$, that is to say the map $z \mapsto \one_{F_y} * \one_{A_z}$ is periodic in $z$.
		
		Let $\Sigma$ denote the collection of all $\langle \ell k\rangle$-periodic subsets of $\Z$; this is a finite set that contains $A_z$ for all $z \in \Z$.  Introduce an equivalence relation on $\Sigma$ by declaring $B \sim B'$ if $\one_{F_y} * \one_{B} = \one_{F_y} * \one_{B'}$ for all $y \in \Z$.  Thus we have $A_{z+qks} \sim A_z$ for all $z \in \Z$.  Now arbitrarily place a total ordering on the finite set $\Sigma$, and for each $z \in \Z$ let $\tilde A_z \in \Sigma$ denote the minimal element in the equivalence class $\{ B \in \Sigma : B \sim A_z\}$.  Then we have $\tilde A_{z+qks} = \tilde A_z \sim A_z$ for all $z \in \Z$.  If we then define the modified set
		$$ \tilde A \coloneqq \{ (x,z): z \in \Z, x \in \tilde A_z \}$$
		then $\tilde A$ is $qks\Z^2$-periodic, and we have
		$$ \one_{F_y \times \{y\}} * \one_{\tilde A} = \one_{F_y \times \{y\}} * \one_A$$
		for all $y$, which on taking horizontal slices implies that
		$$ \one_{F_y \times \{y\}} * \one_{\tilde A \cap (\Z \times \{z\})} = \one_{F_y \times \{y\}} * \one_{A \cap (\Z \times \{z\})}$$
		for all $y,z \in \Z$.  Summing in $y$, we conclude that
		$$ \one_{F} * \one_{\tilde A \cap (\Z \times \{z\})} = \one_{F} * \one_{A \cap (\Z \times \{z\})},$$
		giving the claim.
	\end{proof}

	Now we combine this corollary with the argument used to establish Proposition \ref{pxj-prop}(ii) to convert weakly periodic tilings to periodic tilings.
	
	\begin{thm}[Improving a tiling, II]\label{improv-ii}  Let $\ell \geq 1$ and $m \geq 2$ be natural numbers, and let $h_1,\dots,h_m$ be pairwise incommensurable elements of $\Z^2$.  Suppose that $F$ is a finite subset of $\Z^2$, that $E$ is an $\ell\Z^2$-periodic subset of $\Z^2$, and $A$ is a tiling of $E$ by $F$.  Suppose that $A$ is weakly periodic, and more specifically that $A$ is the disjoint union $A = A_1 \uplus \dots \uplus A_m$ where each $A_j$ is $\langle \ell h_j\rangle$-periodic.  Then there exists a $\ell M \Z^2$-periodic set $A'$ such that $\one_F * \one_A = \one_F * \one_{A'} = \one_E$, where $M$ is an integer with the bound
		$$ M \ll_{|F|} (\prod_{i=1}^m \|h_i\|)^{m + |F|(|F|-1)/2} \mathrm{diam}(F)^{m|F|(|F|-1)/2}.$$
		Furthermore each $\one_F * \one_{A_j}$ is $\ell N_j \Z^2$-periodic for some integer $N_j$ with bound
		$$ N_j \ll_{|F|} (\prod_{i=1}^m \|h_i\|)^{m-1} \|h_j\|.$$
	\end{thm}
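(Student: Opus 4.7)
The plan is to proceed in two stages. The first stage upgrades the periodicity of each convolution $g_j := \one_F * \one_{A_j}$ from merely $\langle \ell h_j\rangle$-periodic to fully $\ell N_j\Z^2$-periodic. The second stage invokes Corollary \ref{improv} on each $A_j$ to produce a fully periodic replacement $A'_j$ and assembles these into the desired $A'$.

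For the first stage, I would adapt the discrete-differentiation technique from the proof of Proposition \ref{pxj-prop}(ii). Pick $\tilde e \in \Z^2$ with $\|\tilde e\| = O_{|F|}(1)$ incommensurable to every $h_i$, and set $N_0 := \mathrm{lcm}\{|h_i \wedge h_j| : i \ne j\}$. By \eqref{cramer-include}, the vector $e := \ell N_0 \tilde e$ lies in each lattice $\langle \ell h_i, \ell h_j \rangle$ for $i \ne j$, so we can write $e = a_i \ell h_i + b_i \ell h_j$ with $a_i, b_i \in \Z$. Applying the operator $\prod_{i \ne j}\Delta_{e - b_i \ell h_j}$ to the tiling identity $\sum_k g_k = \one_E$: each $k \ne j$ summand is annihilated by its factor $\Delta_{a_k \ell h_k}$; the $k = j$ summand collapses to $\Delta_e^{m-1}g_j$ via the $\langle \ell h_j\rangle$-periodicity of $g_j$; and the right-hand side vanishes since every $e - b_i \ell h_j = a_i\ell h_i \in \ell\Z^2$ while $\one_E$ is $\ell\Z^2$-periodic. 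Thus $\Delta_e^{m-1}g_j = 0$, and since $g_j$ is bounded by $|F|$, a telescoping induction (the observation that if $\Delta_v h$ is $\langle v\rangle$-periodic and $h$ is bounded then $\Delta_v h = 0$) forces $g_j$ to be $\langle e\rangle$-periodic. Combined with $\langle \ell h_j\rangle$-periodicity and \eqref{cramer-include}, $g_j$ is $\ell N_j\Z^2$-periodic with $N_j$ bounded as claimed.

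For the second stage, factor $h_j = c_j h'_j$ with $h'_j \in \Z^2$ primitive; then $A_j$ is $\langle \ell c_j h'_j\rangle$-periodic while $\one_F * \one_{A_j}$ is $\ell N_j\Z^2$-periodic. Corollary \ref{improv} (with $\ell \to \ell N_j$, $k \to c_j$, $h \to h'_j$) then yields a fully periodic $A'_j$ with the coset-by-coset identity $\one_F * \one_{A_j \cap (\langle h'_j\rangle + y)} = \one_F * \one_{A'_j \cap (\langle h'_j\rangle + y)}$. Taking $A' := \bigcup_j A'_j$ and verifying that $\one_{A'} = \sum_j \one_{A'_j}$ is a genuine indicator gives $\one_F * \one_{A'} = \sum_j \one_F * \one_{A_j} = \one_E$, with the period $\ell M$ emerging as the LCM of the $A'_j$ periods. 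The stated bound on $M$ should then follow from combining the Step 1 bound on $N_j$ with the slicing parameter $s \ll (\|h_j\|\mathrm{diam}(F))^{|F|(|F|-1)/2}$ from Corollary \ref{improv}, multiplied across all $j$. The principal obstacle I anticipate is rigorously establishing that $\sum_j \one_{A'_j}$ takes values only in $\{0, 1\}$: a priori this is only a nonnegative integer-valued function whose convolution with $\one_F$ is an indicator, and extracting disjointness of the $A'_j$'s will likely require refining the selection of representatives in Corollary \ref{improv} so that the original slice decomposition $A = \uplus_j A_j$ is preserved upon replacement.
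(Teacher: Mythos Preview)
Your two-stage plan matches the paper's proof essentially line for line: discrete differentiation to upgrade each $\one_F * \one_{A_j}$ to full periodicity, then Corollary~\ref{improv} on each piece and assembly.

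The ``principal obstacle'' you flag, however, is not an obstacle at all, and the paper dispatches it in one sentence.  Once you have $\one_F * \sum_{j=1}^m \one_{A'_j} = \one_E$, note that $g \coloneqq \sum_j \one_{A'_j}$ is a nonnegative integer-valued function and $F$ is nonempty (else there is no tiling).  Fix any $f_0 \in F$; then for every $y \in \Z^2$,
\[
g(y) \le \sum_{f \in F} g(y + f_0 - f) = (\one_F * g)(y+f_0) = \one_E(y+f_0) \le 1,
\]
so $g$ is $\{0,1\}$-valued and the $A'_j$ are automatically disjoint.  No refinement of the representative selection in Corollary~\ref{improv} is needed.

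One small wrinkle in your second stage: as stated, Corollary~\ref{improv} takes as input a set $A_j$ that is $\langle \ell k h'_j\rangle$-periodic with $\one_F * \one_{A_j}$ being $\ell\Z^2$-periodic, for the \emph{same} $\ell$.  In your setup $A_j$ is $\langle \ell c_j h'_j\rangle$-periodic but $\one_F * \one_{A_j}$ is only $\ell N_j\Z^2$-periodic, so when you invoke the corollary with $\ell \to \ell N_j$ you need $A_j$ to be $\langle \ell N_j k h'_j\rangle$-periodic for some $k$.  This is fine (take $k = c_j$, since $\langle \ell c_j h'_j\rangle$-periodicity implies $\langle \ell N_j c_j h'_j\rangle$-periodicity), but be sure the parameters line up when you track the period bound.
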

	
	\begin{proof}  We can locate an element $\tilde e \in \Z^2$ of magnitude $O_m(1)$ which is incommensurable with all of the $h_i$.  We define $N$ to be the least common multiple of the $|h_i \wedge h_j|$ for $1 \leq i < j \leq m$, and set $e \coloneqq \ell N \tilde e$.  Observe that
		$$ N \leq \prod_{1 \leq i < j \leq m} |h_i \wedge h_j| \leq (\prod_{i=1}^m \|h_i\|)^{m-1}.$$
		
		Let $1 \leq j \leq m$.  We have the identity
		$$ \one_F * \one_{A_j} + \sum_{i\in\{1,\dots,m\}\setminus\{j\}} \one_F * \one_{A_i} = \one_E$$
		To eliminate all the terms besides $ \one_F * \one_{A_j}$ we apply discrete differentiation operators as in the proof of Proposition \ref{pxj-prop}(ii).  From \eqref{cramer-include} we can write
		$$ e = a_{i,j} \ell h_i + b_{i,j} \ell h_j$$
		for all $1 \leq i \leq m$ distinct from $j$ and some integers $a_{i,j}, b_{i,j}$.  Then from the periodicity properties of the $A_i$, $A_j$ we have
		$$ \Delta_{e - b_{i,j} \ell h_j} (\one_F * \one_{A_i}) = 0$$
		and
		$$ \Delta_{e - b_{i,j} \ell h_j} (\one_F * \one_{A_j}) = \Delta_e (\one_F * \one_{A_j});$$
		also from the $\ell\Z^2$-periodicity of $E$ one has
		$$ \Delta_{e - b_{i,j} \ell h_j} \one_E = 0.$$
		Applying each of the discrete derivative operators $\Delta_{e - b_{i,j} \ell h_j}$ in turn, we conclude that
		$$ \Delta_e^{m-1} (\one_F * \one_{A_j}) = 0.$$
		Thus, for any $x$, the map $n \mapsto \one_F * \one_{A_j}(x+ne)$ is polynomial in $n$; but it is also bounded, hence constant.  Thus $\one_F * \one_{A_j}$ is $\langle e \rangle$-periodic and $\langle \ell h_j\rangle $-periodic, hence by \eqref{cramer-include} it is $\ell N  M_j \Z^2$-periodic for some integer $M_j$ with $M_j \ll_m \|h_j\|$. We now split $h_j = k_j h'_j$ where $k_j$ is a natural number and $h'_j$ is primitive.  By Corollary \ref{improv} we see that we can find a $q_j k_j s_j \Z^2$-periodic set $A'_j$ such that $\one_F * \one_{A_j} = \one_F * \one_{A'_j}$, where $q_j = \ell N M_j C_j$ for some $C_j = O_{|F|}(1)$, and $s_j$ is a positive integer with the bound
		$$ s_j \leq (\|h'_j\| \mathrm{diam}(F))^{|F|(|F|-1)/2}.$$
		We conclude that $q_j k_j s_j = \ell N L_j$ with
		$$ L_j \ll_{|F|} \|h_j\|^{1+|F|(|F|-1)/2} \mathrm{diam}(F)^{|F|(|F|-1)/2}.$$
		Summing over $j$, we have
		$$ \one_F * \sum_{j=1}^m \one_{A'_j} = \one_F * \one_A = \one_E.$$
		In particular this shows that $\sum_{j=1}^m \one_{A'_j}$ is bounded by $1$, that is to say the $A'_j$ are disjoint.  If we  set $A' \coloneqq \bigcup_{j=1}^m A'_j$, we then have $\one_F * \one_A = \one_F * \one_{A'} = \one_E$, and $A'$ is $\ell M \Z^2$-periodic with
		$$ M \leq N \prod_{j=1}^m L_j \ll_{|F|} (\prod_{i=1}^m \|h_i\|)^{m + |F|(|F|-1)/2} \mathrm{diam}(F)^{m|F|(|F|-1)/2}$$
		giving the claim.
	\end{proof}

   We can now conclude the proof of \thmref{period-quant}.
    
	\begin{proof}[Proof of \thmref{period-quant}] 
		The claim is trivial for $|F|=1$, so suppose $|F|>1$.  By Theorem \ref{main-quant}, there exist $m,h_1,\dots,h_m,L$ obeying the conclusions of that theorem, such that $A$ is the disjoint union of $\langle \ell L h_j \rangle$-periodic sets for $j=1,\dots,m$.  Applying Theorem \ref{improv-ii} (with $\ell$ replaced by $\ell L$), we can find an $\ell L M' \Z^2$-periodic set $A'$ with $\one_F * \one_A = \one_F * \one_{A'} = \one_E$, where
	$$ M' \ll_{|F|} (\prod_{i=1}^m \|h_i\|)^{\tfrac{1}{2}(|F|+2)(|F|-1)} \mathrm{diam}(F)^{\tfrac{1}{2}|F|(|F|-1)^2}.$$
	Using the bounds \eqref{h-b}, \eqref{l-b}, we have
	$$ LM' \ll_{|F|}\mathrm{diam}(F)^{|F|(|F|-1)} \mathrm{diam}(F)^{\tfrac{1}{2}(|F|+2)(|F|-1)^3}   \mathrm{diam}(F)^{\tfrac{1}{2}|F|(|F|-1)^2}.$$
	The claim follows.
	\end{proof}
	
	\section{One-periodic tilings}\label{oneper-sec}
	
	Call a set $A$ \emph{one-periodic} if $A$ is $\langle h \rangle$-periodic for some non-zero $h$.  For any given tile $F$ of $\Z^2$, we consider\footnote{We thank Mihalis Kolountzakis (private communication) for suggesting this question.} the following question:
	
	\begin{question}\label{one-per}  Let $F$ be a finite tile of $\Z^2$.
		Does there exist a tiling of $\Z^2$ by $F$ which is not one-periodic?
	\end{question}
	
	The answer to this question is positive for some tiles and negative for others, as the following examples show:
	
	\begin{enumerate-math}
	\item If $F_0$ is the square $\{0,1\}^2$, then every tiling of $\Z^2$ by $F_0$ can be seen to be either $\langle (2,0) \rangle$-periodic or $\langle (0,2)\rangle$-periodic, and hence the answer to \quesref{one-per} is negative in this case. More generally, from \cite[Theorem 19]{szegedy} it follows that the answer to \quesref{one-per} is negative for any tile of cardinality $4$ that contains zero and generates $\Z^2$.
	\item On the other hand, in \subsecref{sub-periodicity} it was shown that the four-element tile $F_1 \coloneqq \{0,2\}\times\{0,1\}$ (which does \emph{not} generate $\Z^2$) admits tilings $A$ of $\Z^2$ that are not one-periodic.  Note that if $A$ is a tiling of $\Z^2$ by $F_2$, then $2A$ is a tiling of $\Z^2$ by $F_2 \coloneqq 2F_1 + \{0,1\}^2 =  \{0,4\}\times\{0,2\}+\{0,1\}^2$.  Thus the answer to \quesref{one-per} is positive for the tiles $F_1, F_2$.  In particular it is possible to have non-one-periodic tilings even when the tile $F$ contains $0$ and generates all of $\Z^2$ as an abelian group.
	\item If the tile $F$ is collinear (it lies in a one-dimensional affine subspace of $\R^2$), then Corollary \ref{structure-1d} implies that all tilings of $\Z^2$ by $F$ are $\langle h \rangle$-periodic for some universal period $h$ that is parallel to the line that $F$ lies in.  Hence the answer to Question \ref{one-per} is negative in this case.
	\item If the tile $F$ is ``connected'' in the sense that $F+[-\tfrac{1}{2},\tfrac{1}{2}]^2$ is simply connected, then  by \cite[Theorem 5.5]{gn}, $F$ admits only one-periodic tilings of $\Z^2$.  Hence the answer to \quesref{one-per} is negative in this case. This of course contains the first part of (i) as a special case.
	\item  From \cite[Theorem 17]{szegedy} it follows that if $F$ has prime order $|F|=p$ then every tiling of $\Z^2$ by $F$ is $p\Z^2$-periodic, so the answer to \quesref{one-per} is negative in this case.
	\end{enumerate-math}
	
	We do not have a full classification of the tiles $F$ which have a negative answer to \quesref{one-per}.  However, the results presented in our paper do at least show that this question is decidable for any given tile, even if one replaces the set $\Z^2$ by more general periodic subsets of $\Z^2$.
	
	\begin{thm}[Decidability of \quesref{one-per}]\label{algorithm}
		There is an algorithm which, when given periodic subset $E$ of $\Z^2$ and a finite set $F$ that tiles $E$ as input\footnote{Note that a periodic subset $E$ of $\Z^2$ can be stored using a finite amount of memory, by first storing a pair of generators for the periodicity lattice, and then storing a coset representative for each of the cosets of that lattice that lie in $E$.}, decides in finite time whether $F$ admits a non one-periodic tiling of $E$, or not.
	\end{thm}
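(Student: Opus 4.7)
My plan is to combine \thmref{main-quant} and \thmref{period-quant} with the slicing machinery of \secref{periodic-sec} to reduce \quesref{one-per} to a finite enumeration. First, the algorithm computes the structural data $(\Lambda, L, h_1, \dots, h_m)$ produced by \thmref{main-quant} applied to the pair $(E, F)$; by that theorem, this data depends only on $E, \ell, F$ and not on the particular tiling. Recall from \propref{coset} that $\ell L h_j \in \Lambda$ for every $j$. In particular, if $m = 1$ then on every coset $C$ of $\Lambda$ the intersection $A \cap C$ is $\langle \ell L h_1 \rangle$-periodic, and since $\ell L h_1 \in \Lambda$ preserves each such coset, $A$ itself is $\langle \ell L h_1 \rangle$-periodic, hence one-periodic. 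In this case the algorithm outputs NO.

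When $m \geq 2$, the algorithm invokes \thmref{period-quant} to produce an effective bound $M_0 \ll_{|F|} \diam(F)^{O(|F|^4)}$ with the property that whenever $E$ admits a tiling by $F$, it admits an $\ell M_0 \Z^2$-periodic one. It then enumerates the finite set of $\ell M_0 \Z^2$-periodic subsets $A'$ of $\Z^2$ satisfying the tiling equation $\one_F * \one_{A'} = \one_E$, and for each tests whether $A'$ is one-periodic; this is a finite test, as any period may be reduced to lie in a fundamental domain of $\ell M_0 \Z^2$. The algorithm outputs YES if some non-one-periodic candidate is found, and NO otherwise.

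Correctness of the YES output is immediate; the heart of the proof is to verify the NO output, namely the claim that the existence of any non-one-periodic tiling forces the existence of a non-one-periodic $\ell M_0 \Z^2$-periodic one. Given a non-one-periodic tiling $A$, the plan is to apply \thmref{main-quant} to $A$ and observe that the resulting skeleton $j \colon \{\text{cosets of }\Lambda\} \to \{1, \dots, m\}$ must take at least two distinct values, since otherwise the argument of the $m=1$ case would force $A$ to be one-periodic. Forming the weakly periodic decomposition $A = \bigsqcup_{j=1}^m A_j$ with $A_j \coloneqq \bigcup_{C : j(C) = j} (A \cap C)$, the plan is then to feed this into \thmref{improv-ii} to obtain a periodic tiling $A' = \bigsqcup_{j=1}^m A'_j$ with $\one_F * \one_{A'} = \one_E$.

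The main obstacle will be verifying that $A'$ inherits the non-one-periodicity of $A$. Tracing through the slicing argument underlying \corref{improv} (which feeds into \thmref{improv-ii}), one needs to show that for each $j$ the replacement $A_j \mapsto A'_j$ preserves the set of $\Lambda$-cosets on which $A_j$ is supported; granted this, the skeleton of $A'$ remains non-constant, and re-running the $m=1$ argument rules out $A'$ being one-periodic. Should this support-preservation step prove too delicate, a natural fallback is to enumerate directly the finitely many non-constant skeletons $\sigma$ and decide, via \corref{structure-1d} applied coset-by-coset (reducing to a finite collection of one-dimensional tiling decision problems), whether some tiling realizes $\sigma$; this yields the same decidability conclusion without invoking support preservation.
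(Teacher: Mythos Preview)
Your proposed algorithm is broken at a basic level: every $\ell M_0 \Z^2$-periodic set is trivially one-periodic (any nonzero element of $\ell M_0 \Z^2$ is a period), so your enumeration over $\ell M_0 \Z^2$-periodic tilings will \emph{never} produce a non-one-periodic candidate, and the algorithm will always output NO. Correspondingly, the statement you identify as ``the heart of the proof''---that a non-one-periodic tiling forces a non-one-periodic $\ell M_0 \Z^2$-periodic one---is vacuously false. The support-preservation step cannot rescue this: even granting that $A'_j$ is supported on the same $\Lambda$-cosets as $A_j$, the resulting $A' = \bigcup_j A'_j$ is a finite union of doubly periodic sets, hence doubly periodic, hence one-periodic. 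A non-constant skeleton does not prevent one-periodicity; the $m=1$ argument gives only one implication. Your fallback has the same defect: realizing a non-constant skeleton $\sigma$ by \emph{some} tiling does not certify the existence of a non-one-periodic tiling, since a fully periodic tiling can realize a non-constant $\sigma$ just as well.

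The paper's proof proceeds by a genuinely different mechanism. After parametrizing all tilings by tuples $(A'_1,\dots,A'_m)$ of doubly periodic sets together with independent coset-by-coset ``slide'' choices, it introduces the notion of a \emph{chameleon} slice of $A'_j$: a slice $A'_j \cap (\langle h'_j\rangle + y)$ that can be replaced by a distinct $F$-equivalent one-periodic set. The dichotomy is then: if at most one $A'_j$ has a chameleon slice, every tiling built from that tuple is forced to be one-periodic (all other $A_j$ must equal the doubly periodic $A'_j$, and the flexible piece is $\langle h_{j_0}\rangle$-periodic); if at least two $A'_j$ have chameleon slices, one explicitly constructs a non-one-periodic tiling by performing two slides in incommensurable directions. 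Both the tuple enumeration and the chameleon test are finite, giving decidability. The key idea you are missing is that one must detect \emph{flexibility} in at least two directions within a single periodic skeleton, not merely the existence of a periodic tiling with some property.
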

	
	\begin{proof}	
		Suppose that $F$ is a tile of an $\ell\Z^2$ periodic set $E$.  From \thmref{main-quant} and \thmref{improv-ii} we can compute positive integers $m, N_1,\dots,N_m$ and pairwise incommensurable $h_1,\dots,h_m \in \Z^2$, with the property that for every tiling set $A$ of $E$ by $F$, there exists partitions $A = A_1 \uplus \dots \uplus A_m$ and $E = E_1 \uplus \dots \uplus E_m$, where for each $1 \leq j \leq m$, $A_j$ is $\langle h_j \rangle$-periodic, $E_j$ is $N_j \Z^2$-periodic, and $A_j$ is a tiling set for $E_j$ by $F$: $\one_F * \one_{A_j} = \one_{E_j}$.  We write $h_j = k_j h'_j$ for a (computable) natural number $k_j$ and primitive $h'_j$.  By Corollary \ref{improv}, we can then compute positive integers $M_1,\dots,M_m$ such that whenever $A,A_1,\dots,A_m,E_1,\dots,E_m$ are as above, one can find an $M_j \Z^2$-periodic set $A'_j$ for each $j=1,\dots,m$ such that
		$$ A_j \cap (\langle h'_j \rangle+y) \equiv_F A'_j \cap (\langle h'_j \rangle+y)$$
		for all cosets $\langle h'_j \rangle+y$, where we use $A \equiv_F A'$ to denote the equivalence relation $\one_F * \one_A = \one_F * \one_{A'}$.  By construction we have
		$$ \sum_{j=1}^m \one_F * \one_{A'_j} =  \sum_{j=1}^m \one_F * \one_{A_j} = \sum_{j=1}^m \one_{E_j} = \one_E.$$
		
		Conversely, if for each $j=1,\dots,m$ we locate a $M_j \Z^2$-periodic set $A'_j$ such that
		\begin{equation}\label{apj-eq}
		 \sum_{j=1}^m \one_F * \one_{A'_j} = \one_E
		 \end{equation}
		and then for each coset $A'_j \cap (\langle h'_j \rangle+y)$ we find a $\langle h_j \rangle$-periodic subset $A_{j,\langle h'_j \rangle+y}$ of $\langle h'_j \rangle+y$ such that 
		\begin{equation}\label{aj-apj}
		A_{j,\langle h'_j \rangle+y} \equiv_F A'_j\cap (\langle h'_j \rangle+y)
		\end{equation}
		then the sets
		$$ A_j \coloneqq \bigcup_{\langle h'_j \rangle+y \in \Z^2 / \langle h'_j \rangle} A_{j,\langle h'_j \rangle+y}$$
		are such that
		$$ A_j \equiv_F A'_j$$
		and hence
		$$ \sum_{j=1}^m \one_F * \one_{A_j} = \one_E.$$
		In particular, the $A_j$ are disjoint and their union 
		\begin{equation}\label{A-def}
		A \coloneqq \bigcup_{j=1}^m A_j= \bigcup_{j=1}^m \bigcup_{y\in\langle h'_j \rangle^\perp} A_{j,\langle h'_j \rangle+y}
		\end{equation}
		tiles $E$ by $F$.  Thus, we have a way of completely describing all the tilings $A$ of $E$ by $F$: we first locate all tuples $(A'_1,\dots,A'_m)$ of $M_j \Z^2$-periodic sets $A'_j$ obeying \eqref{apj-eq}, then for each such tuple, each $j=1,\dots,m$, and each coset $\langle h'_j \rangle+y$, we choose a $\langle h_j \rangle$-periodic subset $A_{j,\langle h'_j \rangle+y}$ obeying \eqref{aj-apj}, then the unions \eqref{A-def} give all the possible tilings of $E$ by $F$.
		
		Note that there are only finitely many $M_j\Z^2$-periodic subsets of $\Z^2$ (in fact there are $2^{M_j^2}$ many), so there are only finitely many possible tuples $(A'_1,\dots,A'_m)$ that can arise in this fashion.  For each such tuple, the identity \eqref{apj-eq} can be verified or disproved in finite time.  Hence we can completely enumerate all the possible tuples $(A'_1,\dots,A'_m)$ that can arise in the above construction.  Similarly, for each coset $\langle h'_j \rangle+y$ and any fixed choice of $A'_j$, the set of all $\langle h_j \rangle$-periodic subsets $A_{j, \langle h'_j \rangle+y}$ of $\langle h'_j \rangle+y$ obeying \eqref{aj-apj} is also finite and can be enumerated in finite time.  However, there are infinitely many such cosets, and hence one may potentially need to loop over an infinite number of choices in order to enumerate all possible tilings (cf. the tilings \eqref{a1-ex}, \eqref{a2-ex}).
		
	    Fortunately, for the question of whether there admit non-one-periodic tilings we do not need to exhaust over infinitely many possibilities, and can instead reason as follows.  Let us call a slice $A'_j\cap (\langle h'_j \rangle+y)$ of an $M_j \Z^2$-periodic set $A'_j$ by a coset $(\langle h'_j \rangle+y)$ a \emph{chameleon} if there exists a $\langle h_j \rangle$-periodic subsets $A_{j, \langle h'_j \rangle+y}$ of $\langle h'_j \rangle+y$ obeying \eqref{aj-apj} which is not equal to $A'_j\cap (\langle h'_j \rangle+y)$, in which case we say that $A'_j$ \emph{contains a chameleon slice}.  Note that for any given $M_j \Z^2$-periodic set $A'_j$, the slices of $A'_j\cap (\langle h'_j \rangle+y)$ repeat periodically in $y$ and hence one can determine whether $A'_j$ contains a chameleon slice or not in finite time.
	    
	    Suppose first that $A$ arises from a tuple $(A'_1,\dots,A'_m)$ with the property that at most one of the $A'_j$ admits a chameleon slice, thus we have some $j_0$ such that $A'_j$ does not admit a chameleon slice for any $j \neq j_0$.  Then for $j \neq j_0$, the set $A_{j, \langle h'_j \rangle+y}$ is necessarily equal to $A'_j\cap (\langle h'_j \rangle+y)$ for every $y$, hence $A_j$ is necessarily equal to the $M_j \Z^2$-periodic set $A'_j$. Since $A_{j_0}$ is also $\langle h_{j_0} \rangle$-periodic, we conclude that the set $A$ constructed in \eqref{A-def} is one-periodic in this case.
	    
	    Now suppose that $A$ arises from a tuple $(A'_1,\dots,A'_m)$ with the property that there are at least two sets $A'_{j_1}, A'_{j_2}$ which admit chameleon slices $A'_{j_1} \cap (\langle h'_{j_1}+y_1)$, $A'_{j_2} \cap (\langle h'_{j_2}+y_2)$ respectively.  Thus for $i=1,2$, we may find a $\langle h_{j_i}\rangle$-periodic subset $A^{(i)}$ of $\langle h'_{j_i} \rangle+y_i$ that is distinct from $(A')^{(i)} \coloneqq A'_{j_i} \cap \langle h'_{j_i} \rangle+y_i$ such that
	    $$ A^{(i)} \equiv_F (A')^{(i)}.$$
	    In particular, if we set $A' \coloneqq \bigcup_{j=1}^m A'_j$, and let $A$ be the set formed from $A'$ by replacing $(A')^{(i)}$ with $A^{(i)}$ for $i=1,2$, so that
	    $$ \one_A = \one_{A'} + \sum_{i=1}^2 (\one_{A^{(i)}} - \one_{(A')^{(i)}}),$$
	    then $A$ is equivalent to $A'$:
	    $$ \one_A * \one_F = \one_{A'} * \one_F = \one_E.$$
	    We claim that $A$ is not one-periodic.  Since $\one_{A'}$ is $M\Z^2$-periodic for some $M$, it suffices to show that $\sum_{i=1}^2 (\one_{A^{(i)}} - \one_{(A')^{(i)}})$ is not one-periodic.  But each summand $\one_{A^{(i)}} - \one_{(A')^{(i)}}$ is a non-trivial one-periodic function supported on the coset $\langle h'_{j_i} + y_i \rangle$, and the claim is then evident from the incommensurability of $h'_{j_1}$ and $h'_{j_2}$.
	    
	    From the above discussion, we conclude that $F$ admits non-one-periodic tilings if and only if there exist $M_j\Z^2$-periodic sets $A'_j$ for $j=1,\dots,m$ obeying \eqref{apj-eq} such that at least two of the $A'_j$ admit chameleon slices.  Since we can computably enumerate all the tuples $(A'_1,\dots,A'_m)$ obeying \eqref{apj-eq} and test all of them for chameleon slices, the claim follows.
	\end{proof}
	
	Informally, the proof of Theorem \ref{algorithm} tells us that all the tilings of a given periodic set $E$ by a given tile $F$ arise from starting with a doubly periodic tiling $A' = A'_1 \uplus \dots \uplus A'_m$ (which is drawn from a finite list of such tilings) and performing a (possibly infinite) number of ``slide moves'' in which one or more slices $A'_j\cap (\langle h'_j \rangle+y)$ is replaced with an equivalent (one-periodic) set $A_{j, \langle h'_j \rangle+y}$, in the spirit of \eqref{a1-ex} or \eqref{a2-ex}.  In principle, this should reduce any reasonable question about such tilings to a finite computation for any fixed choice of $E,F$, with the existence of non-one-periodic tilings serving as just one representative example of such a question.

%\newpage %% AUTHOR: please comment out this line.  It serves only
%%   to demonstrate both types of header line in daj-template.pdf

%\section{Expansion estimates}

 %More of the body of your paper goes here~\cite{bergelson-johnson-moreira}.

%%% AUTHOR: optional appendix here
\appendix %% you may comment this out if no Appendix
%\section*{Appendix}
%\section{Improving the constants}
%Material is placed here as needed.

%%% AUTHOR: optional acknowledgments here
\section*{Acknowledgments} %%  you may comment this out if no Ackno
We thank Jarkko Kari for alerting us to the relevant references \cite{k-survey}, \cite{ks}, \cite{szabados}. 
	We also thank Mihalis Kolountzakis for helpful suggestions, and Nishant Chandgotia and several readers of the second author's blog for further  comments and corrections.  We have also implemented several suggestions of the anonymous referee.

%%% AUTHOR:
%%% Bibliography goes here. Note that the arXiv cannot process bibtex
%%% or biber bibliographies.  Example of acceptable bibliograpy format:
\bibliographystyle{amsplain}

%% AUTHOR: You can generate such a bibliography from a .bib file by 
%% running pdflatex/bibtex/pdflatex/pdflatex and then pasting the .bbl file
%% between \begin{thebibliography} and \end{bibliography}

%%% AUTHOR: Include a short description of each author following the
%%% structure below. Use the same short tags used previously.  
%%% Use \imageat{} and \imagedot{} instead of "@" and "." in
%%% email addresses-this replaces the symbols with graphics to avoid 
%%% e-mail address harvesting from the .pdf file
\begin{dajauthors}
\begin{authorinfo}[RG]
  Rachel Greenfeld\\
  UCLA Department of Mathematics\\ 
  Los Angeles, CA 90095-1555\\
  
  greenfeld\imageat{}math.ucla\imagedot{}edu \\
  \url{https://www.math.ucla.edu/~greenfeld}
\end{authorinfo}
\begin{authorinfo}[tao]
  Terence Tao\\
  UCLA Department of Mathematics\\ 
  Los Angeles, CA 90095-1555\\
  tao\imageat{}math.ucla\imagedot{}edu \\ 
  \url{https://www.math.ucla.edu/~tao}
\end{authorinfo}

\end{dajauthors}

\end{document}